\documentclass[11pt]{article}
\usepackage{amsmath,amssymb,amsthm,amscd,esint}
\usepackage{url,endnotes,hyperref}
\usepackage{tikz-cd}

\numberwithin{equation}{section}

\newtheorem{theorem}{Theorem}[section]
\newtheorem{lemma}[theorem]{Lemma}
\newtheorem{definition}[theorem]{Definition}

\theoremstyle{corollary}

\theoremstyle{conjecture}

\theoremstyle{assumption}

\theoremstyle{proposition}
\newtheorem{proposition}[theorem]{Proposition}
\theoremstyle{remark}
\newtheorem{remark}[theorem]{Remark}
\numberwithin{equation}{section}
\everymath{\displaystyle}

\newcommand{\Ric}{\operatorname{Ric}}
\newcommand{\Rm}{\operatorname{Rm}}
\newcommand{\Vol}{\operatorname{Vol}}
\newcommand{\diam}{\operatorname{diam}}

\newcommand{\Bl}{\operatorname{Bl}}
\newcommand{\Spec}{\operatorname{Spec}}

\newcommand{\PP}{\mathbb P}

\newcommand{\OO}{\mathcal O}

\begin{document}

\title{Finite Time Type I Singularities of the K\"ahler Ricci Flow}
\author{}
\date{}
\maketitle

\vspace{-3em}
\begin{center}
\Large
\begin{tabular}{cc}
Tongxin Xu$^*$ & \qquad Zhenlei Zhang$^\dagger$
\end{tabular}
\end{center}
\vspace{0.5em}

\begin{abstract}
We prove the Feldman–Ilmanen–Knopf conjecture for finite time
Type I singularities of the Kähler–Ricci flow. More precisely, for any compact Kähler manifold $Y$ and its blow-up $\pi:\Bl_pY\longrightarrow Y$, if $[\omega_0]-Tc_1(M)=\pi^*[\omega_Y]$, then any Type I parabolic blow-up limit of the K\"ahler Ricci flow along the exceptional divisor is the FIK shrinker $\operatorname{Tot}(\OO_{\PP^{n-1}}(-1))$.
\end{abstract}
\maketitle
\tableofcontents

\section{Introduction}

Ricci flow was introduced by Hamilton  as an evolution equation for Riemannian metrics in 1980s \cite{hamilton1982three}. In this paper, we mainly consider the Kähler Ricci flow. Let $(M,\omega_{0})$ be a compact Kähler manifold of complex
dimension $n$, and let $\omega(t)$ be the solution of the unnormalized
Kähler--Ricci flow
\begin{equation*}
    \frac{\partial}{\partial t}\omega(t)
    =
    -\Ric(\omega(t)),
    \qquad
    \omega(0)=\omega_{0},
\end{equation*}
on its maximal time interval $[0,T)$. Throughout this paper, we assume
that the maximal existence time is finite, namely $T<\infty$. The flow is
said to develop a Type~I singularity at time $T$ if
\begin{equation}\label{eq:type-I}
    \sup_{M}|\Rm(\omega(t))|_{\omega(t)}
    \leq \frac{C}{T-t}
\end{equation}
for some uniform constant $C<\infty$ and every $t\in[0,T)$.

A fundamental problem in the study of finite-time singularities is to
understand the geometry arising under parabolic rescaling. By the work
of Naber and Enders--Müller--Topping, suitable blow-ups based at a
Type~I singular point subconverge to a complete nonflat Ricci shrinker with bounded curvature \cite{Naber,EMT}. In the
Kähler setting, the limit is naturally a Kähler Ricci shrinker.

Finite-time singularities of the Kähler--Ricci flow are closely related
to birational geometry. Song--Weinkove established Gromov--Hausdorff
convergence and smooth convergence away from the exceptional locus for
flows contracting exceptional divisors~\cite{SongWeinkove2013}. The corresponding local Type~I model is expected
to be the $U(n)$-invariant FIK shrinker on $\operatorname{Tot}
    \bigl(\mathcal{O}_{\mathbb{P}^{n-1}}(-1)\bigr)$ \cite{FeldmanIlmanenKnopf2003}. This picture was confirmed in several
symmetric settings. Song proved that finite-time non-collapsed $U(n)$-invariant
Kähler--Ricci flows on $\Bl_{p}\mathbb{P}^{n}$ develop Type~I
singularities \cite{Song2015}, and Guo--Song identified the
volume-noncollapsing blow-up limit with the unique $U(n)$-invariant FIK
shrinker \cite{GuoSong2016}. In complex dimension two, related results
were obtained by Máximo \cite{max14}.

Considerably more is known in Kähler surfaces. Cifarelli--Conlon--Deruelle proved that a noncompact blow-up
limit of a finite-time Type~I singularity on a compact Kähler surface is
the FIK shrinker if and only if the flow is volume-noncollapsing
\cite{CCD}. More recently, Conlon--Hallgren--Ma showed that every
finite-time volume-noncollapsing singularity on a compact Kähler surface
is automatically of Type~I \cite{CHM}. Consequently, every such
singularity is modeled by the FIK shrinker. This gives a rigid
description of finite-time noncollapsing singularities in complex
dimension two without assuming the Type~I condition in advance.
In contrast, for complex dimension at least three, there are finite-time singularities of the flow which are not of Type~I \cite{LTZ24,MT23}.

Bamler developed a general compactness and structure theory for
noncollapsed Ricci flows in the framework of metric flows
\cite{bamler2020entropy,BamlerCompactness,bamler2020structure}. Building on
this framework, Jian--Song--Tian proved that singular-time-scale
rescalings of general finite-time Kähler--Ricci flows, based at
suitable points, subconverge to ancient solutions on families of
analytic normal varieties \cite{JianSongTian2023}.
Hallgren--Jian--Song--Tian further established continuity in time and
codimension at least four of the singular sets of these limits
\cite{HallgrenJianSongTian2024}.

For any compact Kähler manifold $(Y,\omega_Y)$, we consider the finite-time Type~I Kähler--Ricci flow
$(M,\omega(t))=(\operatorname{Bl}_{p}Y,\omega(t))$, $t\in[0,T)$, satisfying
$[\omega_0]-Tc_1(M)=\pi^*[\omega_Y]$ and we set the exceptional divisor to be $E$.
We prove the following theorem:
\begin{theorem}\label{main}
Let $(M,g(t))$ be a finite time Type I Kähler Ricci flow on $[0,T)$, if $[\omega_0]-Tc_1(M)=\pi^*[\omega_Y]$, then for every sequence $\lambda_j \to \infty$ and any $x\in E$, the rescaled Kähler Ricc flows $(M,g_j(s),x)$ defined by $g_j(s):=\lambda_jg(T+{s}/{\lambda_j})$ on $[-\lambda_jT,0)$ subconvergence in pointed $C^{\infty}$-Cheeger-Gromov topology to the unique FIK shrinker on $\operatorname{Tot}(\OO_{\PP^{n-1}}(-1))$.
\end{theorem}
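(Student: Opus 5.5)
We argue in three stages: produce a shrinker limit, pin down its topology and Kähler class, and then invoke a rigidity result.

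\textbf{Step 1: a shrinker limit.} We first show that the blow-up sequence converges to a Kähler Ricci shrinker. The class condition $[\omega_0]-Tc_1(M)=\pi^*[\omega_Y]$ gives $[\omega(t)]\to\pi^*[\omega_Y]$. Hence the volume of $E$ satisfies $\int_E\omega(t)^{n-1}\sim c\,(T-t)^{n-1}$, because $\pi^*[\omega_Y]$ restricts trivially to $E$ and $-c_1(M)|_E=(n-1)[\mathcal O_E(1)]$. Together with Perelman's noncollapsing and the Type I bound, the theorems of Naber and Enders--Müller--Topping apply. For $x\in E$ we also need $x$ to be a Type I singular point, i.e.\ $|\Rm|(x,t)\ge c/(T-t)$. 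This follows from the curvature blow-up forced by the shrinking of $E$: the area of a line in $E$ is $\sim(T-t)$, and Type I distance estimates prevent $E$ from collapsing without curvature. After passing to a subsequence, $(M,g_j(s),x)$ then converges smoothly to a complete nonflat gradient shrinking Kähler--Ricci soliton $(N,g_\infty,f)$ with bounded curvature.

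\textbf{Step 2: topology and class of $N$.} We want $N$ to contain a compact divisor $E_\infty\cong\PP^{n-1}$ that is the limit of $E$ under the rescalings. The rescaled metrics $\lambda_j\omega(T+s/\lambda_j)$ restricted to $E$ have class $\approx -s\,(n-1)[\mathcal O(1)]$, which is bounded. By the Type I diameter bounds, $E$ stays at bounded distance from the basepoint, so $E$ converges smoothly to a compact complex hypersurface $E_\infty\cong\PP^{n-1}$ with normal bundle $\mathcal O(-1)$. Away from $E$, Song--Weinkove's smooth convergence on $M\setminus E$, combined with the fact that $\pi^*\omega_Y$ is locally a flat-rescaling model near $p$, suggests that $N\setminus E_\infty$ is biholomorphic to $\mathbb C^n\setminus\{0\}$ near infinity, with Euclidean volume growth. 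To make this rigorous, the plan is to pass the holomorphic coordinates $z$ of $Y$ near $p$, pulled back and scaled by $\sqrt{\lambda_j}$, to the limit. Their $\bar\partial$-closedness and $L^2$ bounds come from the Kähler class condition via Hörmander estimates. This yields a holomorphic map $N\to\mathbb C^n$ that contracts exactly $E_\infty$, so $N\cong\operatorname{Tot}(\mathcal O_{\PP^{n-1}}(-1))$.

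\textbf{Step 3: rigidity.} We then need uniqueness of Kähler shrinkers on $\operatorname{Tot}(\mathcal O_{\PP^{n-1}}(-1))$ with bounded curvature. Bounded curvature gives quadratic curvature decay to a cone by Munteanu--Wang type estimates. The cone must be $\mathbb C^n$ with the flat metric, or a $U(n)$-quotient cone; Euclidean volume growth coming from the volume of $\pi^*\omega_Y$ near $p$ selects $\mathbb C^n$. We then apply the classification of Cifarelli--Conlon--Deruelle: a complete shrinking gradient Kähler--Ricci soliton with bounded curvature on this manifold, asymptotic to the flat cone, is $U(n)$-invariant and equal to the FIK shrinker. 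Their argument extends to all dimensions through the Futaki invariant/Duistermaat--Heckman method and the uniqueness of the soliton vector field. Since every subsequential limit is this same shrinker, the full statement follows.

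\textbf{Main obstacle.} The hardest step is Step 2: identifying the complex structure of the limit, in particular showing that no other components arise and that the cone at infinity is exactly $\mathbb C^n$. I would try first to control the rescaled pulled-back coordinate functions uniformly via the potential $\varphi$ of the flow, using $\omega(t)\ge c\,\pi^*\omega_Y$ and $|\nabla\varphi|$ Type I bounds.
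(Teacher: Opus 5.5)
\textbf{The gap is Step 2.} Essentially all of the content of the theorem sits in Step 2, and neither of its two claims is established.

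First, nothing controls the second fundamental form of $E$ in the rescaled metrics, so there is no reason for $E$ to converge smoothly. The fixed class $[\omega_j(-1)]|_E$ and bounded ambient geometry give only subconvergence as an effective analytic cycle $\mathcal Z=\sum m_\alpha[Z_\alpha]$ of mass $V_0$, and even that requires first proving an extrinsic diameter bound, as the paper does via the monotonicity formula. Such a cycle may a priori be reducible, singular and non-reduced. In the paper, the fact that $\mathcal Z$ is a reduced $\mathbb P^{n-1}$ with normal bundle $\mathcal O(-1)$ is an \emph{output}, obtained only at the end by comparing $\mathbf M(\mathcal Z)=V_0$ with $\operatorname{Vol}(D_R)=V_0$. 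You cannot assume it.

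Second, your map to $\mathbb C^n$ fails. You do not need H\"ormander: the parabolic Schwarz lemma $\omega(t)\ge C^{-1}\pi^*\omega_Y$ already gives $|\nabla w_j|_{g_j}\le C$ for $w_j=\sqrt{\lambda_j}\,z\circ\pi$. So $w_j$ subconverges to a globally Lipschitz holomorphic map $F$ that vanishes on the limit of $E$. Now look at the FIK shrinker, which is the expected limit. Write its metric as $\sqrt{-1}\partial\bar\partial P(\log|z|^2)$ on $\operatorname{Bl}_0\mathbb C^n$ and solve the soliton ODE with $[\omega_\infty]|_{\mathbb P^{n-1}}=(n-1)h$. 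This gives $P\sim c|z|^{2/\mu}$ with soliton weight $\mu>1$ (for instance $\mu=\sqrt2$ when $n=2$), so distance grows only like $|z|^{1/\mu}$. By Hartogs and Liouville, every Lipschitz holomorphic function on it is constant, hence $F\equiv 0$. Your construction therefore necessarily degenerates. Even a renormalized version would leave open properness, bimeromorphicity, and the absence of other compact subvarieties in the limit. Likewise, Song--Weinkove's convergence on $M\setminus E$ lives at the unrescaled scale and says nothing about the blow-up limit.

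\textbf{Step 3 is misstated.} The same computation shows the FIK shrinker is asymptotic to the non-flat cone $\sqrt{-1}\partial\bar\partial|z|^{2/\mu}$, not to flat $\mathbb C^n$. A uniqueness statement with a ``flat cone'' hypothesis therefore does not apply. Also, bounded curvature alone does not give quadratic curvature decay. What you need is uniqueness on the complex manifold $\operatorname{Tot}(\mathcal O_{\mathbb P^{n-1}}(-1))$ under bounded curvature, which is what the paper cites from \cite[Theorem E]{CDS}.

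\textbf{How the paper closes Step 2.} It uses an entirely different route:
\begin{itemize}
\item The logarithmic Ricci potential $u_t=\log(\pi^*\Omega_Y/\omega_t^n)$ of Conlon--Hallgren--Ma passes to a limit $\phi$ with $\sqrt{-1}\partial\bar\partial\phi=\omega_\infty$ off $Z$. This extends plurisubharmonically across $Z$ and thereby forces every positive-dimensional compact subvariety into $Z$.
\item Sun--Zhang's polarized Fano fibration $q:X\to W$ is then birational with $q^{-1}(o)=Z$.
\item A Chern class computation in $\pi^{-1}(\Omega)$, which retracts onto $E$ and satisfies $-K_M\cdot\ell=n-1$, gives $l(R)\ge n-1$ for each relative extremal ray.
\item Ionescu--Wi\'sniewski gives $l(R)=n-1$, and Andreatta--Occhetta then identifies the contraction as the blow-up of a smooth point with divisor $D_R\simeq\mathbb P^{n-1}$.
\item The mass identity forces $\mathcal Z=[D_R]$, and a smooth positively graded affine cone is $\mathbb C^n$.
\end{itemize}
None of these ingredients appears in your proposal.

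\textbf{Minor point on Step 1.} That every $x\in E$ is a Type~I point should be taken from Collins--Tosatti together with Enders--M\"uller--Topping ($E=\Sigma_I$), not from your heuristic.
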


\paragraph{Outline of the proof.} 
The proof consists of an analytic part and an algebraic part.

Let $(M,g(t))$ be a finite time Type I Kähler Ricci flow on $[0,T)$. Fix \(p\in E\). Type~I
compactness give a smooth pointed limit $(X,g_\infty,J_\infty,p_\infty)$, which is a complete nonflat Kähler Ricci shrinker. A uniform diameter bound for the rescaled exceptional divisors allows
us to pull the entire divisor \(E\) back by the Cheeger--Gromov maps.
The resulting hypersurfaces \(E_i\subset X\) remain in a fixed compact
set and have constant volume \(V_0\).  Bishop compactness therefore
gives an effective compact analytic cycle
\[
        [E_i]\rightharpoonup\mathcal Z,
        \qquad
        Z:=\operatorname{Supp}\mathcal Z,
        \qquad
        \mathbf M_{g_{\infty}}(\mathcal Z)=V_0.
\]
The logarithmic Ricci potential and localization argument of \cite[Proposition 3.2, 5.1]{CHM} shows that every positive-dimensional compact
analytic subvariety of \(X\) is contained in \(Z\).  Thus \(Z\) is the
maximal compact analytic subset of \(X\).

By Sun-Zhang \cite[Theorem 3.1]{SZ}, the K\"ahler--Ricci shrinker $X$ admits a
polarized Fano fibration $q:X\longrightarrow W$, where \(W\) is a normal positively graded affine variety and
\(-K_X\) is \(q\)-ample.  The maximality of \(Z\) implies that \(q\)
is birational and
\[
        q^{-1}(o)=Z,
        \qquad
        X\setminus Z\simeq W\setminus\{o\},
\]
where \(o\) is the vertex of \(W\).

Then we study this birational morphism $q$. For a relative extremal ray
\(R\subset\overline{\operatorname{NE}}(X/W)\), the length of relative extremal ray exhibits rigidity. A local Chern-class
calculation gives \(l(R)\geq n-1\), while the relative
Ionescu--Wi\'sniewski inequality gives \(l(R)\leq n-1\).  Hence $l(R)=n-1$.
The local adjoint-contraction theorem then produces an exceptional
divisor
\[
        D_R\simeq\PP^{n-1},
        \qquad
        N_{D_R/X}\simeq\OO_{\PP^{n-1}}(-1),
\]
contracted to a smooth point.  Since \(D_R\subset Z\) and
\(\operatorname{Vol}(D_R)=V_0=\operatorname{Mass}(\mathcal Z)\), the
limiting cycle is exhausted by \(D_R\):
\[
        \mathcal Z=[D_R],
        \qquad
        Z=D_R.
\]
It follows that there is only one relative extremal ray and that \(q\)
is the ordinary blow-up of \(W\) at \(o\). Finally, the positive grading and the smoothness of \(W\) at its vertex
imply that $W\simeq\mathbb C^n$. Consequently,
\[
        X\simeq\operatorname{Bl}_0\mathbb C^n
          \simeq\operatorname{Tot}\OO_{\PP^{n-1}}(-1).
\]
By the uniqueness theorem of \cite[Theorem E]{CDS}, we know $X$ is the FIK shrinker.

\section{Preliminaries}
\subsection{Kähler Ricci flow }
\subsubsection{Finite time Type I Kähler Ricci flow}
We first fix some setup of cohomology class. Let $\pi:M=\operatorname{Bl}_pY\to Y$ be the blow-up of a compact Kähler manifold at \(p\), with exceptional divisor \(E\simeq\mathbb P^{n-1}\). Writing $[\omega_0]=\pi^*\alpha_0-a_0[E]$ with $\alpha_0\in H^{1,1}(Y;\mathbb{R})$ and $a_0>0$, the cohomological evolution of the unnormalized Kähler–Ricci flow is
\[
[\omega(t)]
=\pi^*(\alpha_0-tc_1(Y))
-\bigl(a_0-(n-1)t\bigr)[E].
\]
Under the pure point-contraction assumption
\begin{equation}\label{2}
     [\omega_0]-Tc_1(M)=\pi^*[\omega_Y],\qquad \omega_Y>0,   
\end{equation}
we have \(T=a_0/(n-1)\). We set $h=c_1\bigl(\mathcal O_{\mathbb P^{n-1}}(1)\bigr)$  and hence
\begin{equation}\label{3}
      [\omega(t)]|_E=(n-1)(T-t)h,
\end{equation}
Moreover, $\lim_{t\nearrow T}\operatorname{Vol}_{\omega(t)}(M)
=\frac1{n!}\int_Y\omega_Y^n>0$, so the flow is automatically non-collapsing.
\begin{definition}\label{soliton}
    A K\"ahler Ricci shrinker is a quadruple $(X,g,J,f)$, where $X$ is a complete K\"ahler manifold with a real holomorphic vector field $\nabla^gf$ satisfying the equation
\[
\operatorname{Ric}(\omega) + \sqrt{-1}\partial \bar{\partial} f = \omega,
\]
where $\operatorname{Ric}(\omega) $ is the Ricci form of Kähler form $\omega$.
\end{definition}
Kähler Ricci shrinker is a self-similar solution of Kähler Ricci flow. More precisely, we can define $g(t):=|t|\varphi_t^*g$, where $\varphi_t$ is a family of biholomorphisms generated by vector field $\frac{\nabla^g f}{|t|}$ with $\varphi_{-1}=\operatorname{id}$. The one-parameter family of metrics $(X,g(t),J)_{t\in(-\infty,0)}$ defines a Kähler Ricci flow and it satisfies the Type I curvature condition naturally.
\begin{definition}\cite[Definition 1.2]{EMT}
Let $(M,g(t))$, $t\in[0,T)$, $T<+\infty$, be a K\"ahler--Ricci flow.
A space-time sequence $(x_i,t_i)$ with $x_i\in M$ and $t_i\to T^-$ is called an \textit{essential blowup sequence} if there exists a constant $C>0$ such that
\[
|\operatorname{Rm}_{g(t_i)}|_{g(t_i)}(x_i)\ge \frac{C}{T-t_i}.
\]
A point $x\in M$ in a Type I K\"ahler--Ricci flow is called a \textit{Type I singular point} if there exists an essential blowup sequence with $x_i\to x$ on $M$.
We denote the set of all Type I singular points by $\Sigma_I$.
\end{definition}

For finite time K\"ahler--Ricci flow, the norm of the curvature $|\mathrm{Rm}|_{g(t)}$ tends to infinity as $t\to T^-$. 
Moreover, in Type I condition, the parabolic maximum principle implies that the curvature blowup is equivalent to
\[
\sup_{M} \left|\operatorname{Rm}_{g(t)}\right|_{g(t)} \ge \frac{1}{8(T-t)} \qquad \text{for all } t\in[0,T),
\]
so the above definition is introduced precisely to single out the non-trivial singular points that yield non-flat rescaled limits.

By \cite{EMT,Naber}, any essential blowup sequence of Type I singularities converges to a non-trivial K\"ahler Ricci shrinker. 
More precisely, for every sequence $\lambda_j \to \infty$ and any $x\in \Sigma_I$, the pointed rescaled Ricc flows $(M,g_j(s),x)$ defined by $g_j(s):=\lambda_j g(T+s/\lambda_j)$ on$[-\lambda_jT,0)$ subconverge in the pointed $C^\infty$-Cheeger--Gromov sense to a complete K\"ahler Ricci shrinker with bounded curvature:
\[
\bigl(M, g_j(s), J_M, x\bigr)_{s\in[-\lambda_jT,0)} 
\xrightarrow{\text{pointed-}C^\infty\text{-Cheeger--Gromov}} 
\bigl(X, g_\infty(s), J_\infty, x_\infty\bigr)_{s\in(-\infty,0)}.
\]
Furthermore, the smooth convergence together with the divergence of the rescaled volume forces the limit $X$ to be noncompact.

In our setting, $(M,\omega(t))=(\operatorname{Bl}_{p}Y,\omega(t))$. 
By \cite[Theorem 1.5, 4.7]{CT} and \cite[Theorem 1.8]{EMT}, we have $E=\operatorname{Null}(\pi^*\omega_Y)=\Sigma_{I}$. 
Thus, the Type I singular points correspond precisely to the points lying on the exceptional divisor.

\subsubsection{Parabolic Schwarz lemma and localization}
For the finite time singularity Kähler Ricci flow, the pure point contraction condition implies the following well-known lemma.
\begin{lemma}\cite[Lemma 3.7.3]{swlecture}\label{Schwarz}
    Suppose there exists a holomorphic map $f : M \to N$ to a compact Kähler manifold $N$ and let $\omega_N$ be a Kähler metric on $N$. We assume that $[\omega_0] - Tc_1(M) = [f^*\omega_N]$. Then on $M \times [0, T)$, $\omega \ge {C}^{-1} f^*\omega_N$ for a uniform constant $C$.
\end{lemma}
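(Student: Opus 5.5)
The plan is the standard parabolic Schwarz lemma argument (Song–Weinkove), i.e. a maximum principle applied to
\[
Q=\log \operatorname{tr}_{\omega} f^*\omega_N - A\varphi .
\]
Here $\varphi$ is a Kähler potential along the flow, normalized by a reference form that stays uniformly positive in the limiting class.

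\textbf{Reference forms and potential.} Set $\hat\omega_T:=f^*\omega_N$, which is smooth and semipositive and represents $[\omega_0]-Tc_1(M)$. Choose a smooth volume form $\Omega$ on $M$. Define
\[
\hat\omega_t:=\frac{1}{T}\bigl((T-t)\omega_0+t\hat\omega_T\bigr),
\]
so that $[\hat\omega_t]=[\omega(t)]$ and $\partial_t\hat\omega_t=\frac1T(\hat\omega_T-\omega_0)=-\operatorname{Ric}(\Omega)$ after choosing $\Omega$ with $\operatorname{Ric}(\Omega)=\frac1T(\omega_0-\hat\omega_T)$. This is possible because that form represents $c_1(M)$. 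The flow then becomes the parabolic complex Monge–Ampère equation
\[
\partial_t\varphi=\log\frac{(\hat\omega_t+\sqrt{-1}\partial\bar\partial\varphi)^n}{\Omega},\qquad \varphi(0)=0,
\]
with $\omega(t)=\hat\omega_t+\sqrt{-1}\partial\bar\partial\varphi$. Since $\hat\omega_t\le C\omega_0$ and $\Omega$ is fixed, the maximum principle gives $\varphi\le C$ and $\partial_t\varphi\le C$ on $[0,T)$. I also need a lower bound $\varphi\ge -C$. For $t\in[0,T)$ each $\hat\omega_t\ge \frac{t}{T}\hat\omega_T$ and $\int\hat\omega_t^n$ is bounded below by $\int\hat\omega_T^n>0$, the latter being positive in the case at hand. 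If needed, I would instead work with $\psi=\varphi+C\,t$ or $\partial_t\varphi$. It suffices to have $\varphi$ bounded above and $\varphi\ge -C$, and for the latter I would invoke the standard $C^0$ estimate for the flow with semipositive big limiting class (Kołodziej/Eyssidieux–Guedj–Zeriahi type). Since $\hat\omega_T=f^*\omega_N$ is the pullback of a Kähler form, I expect the argument to be arranged so that only the upper bound on $\varphi$ is actually needed.

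\textbf{Chern–Lu computation.} Let $u=\operatorname{tr}_{\omega}f^*\omega_N$. The Chern–Lu inequality along the Kähler–Ricci flow gives
\[
\Bigl(\partial_t-\Delta_\omega\Bigr)\log u\le C_0\,u,
\]
where $C_0$ is an upper bound on the bisectional curvature of $\omega_N$. The Ricci term cancels exactly because $\partial_t\omega=-\operatorname{Ric}(\omega)$. Next,
\[
\Bigl(\partial_t-\Delta\Bigr)\varphi=\partial_t\varphi-n+\operatorname{tr}_\omega\hat\omega_t,
\]
and $\operatorname{tr}_\omega\hat\omega_t\ge \frac tT u$. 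To avoid degeneracy near $t=0$, I would use the uniform bound $\hat\omega_t\ge c\,f^*\omega_N$ for $t\ge T/2$ and short-time smoothness on $[0,T/2]$, or simply use $\hat\omega_t\ge \frac{T-t}{T}\omega_0+\frac tT f^*\omega_N\ge c f^*\omega_N$. The last inequality holds for all $t$ because $\omega_0\ge c' f^*\omega_N$ on compact $M$. Hence $\operatorname{tr}_\omega\hat\omega_t\ge c\,u$. Taking $Q=\log u-A\varphi$ with $Ac\ge C_0+1$ yields
\[
\Bigl(\partial_t-\Delta\Bigr)Q\le -u-A\partial_t\varphi+An .
\]
Here $-A\partial_t\varphi$ is the bad term, since I only have an upper bound on $\partial_t\varphi$.

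\textbf{Main obstacle.} The remaining difficulty is exactly this term. I would handle it by choosing instead
\[
Q=\log u-A\varphi+A\,\partial_t\varphi\cdot 0
\]
or, better, by writing $-A\partial_t\varphi=-A\log(\omega^n/\Omega)$ and using the AM–GM bound $\omega^n/\Omega\ge c\,(\ldots)$. The standard trick is $\log(\Omega/\omega^n)\le C+n\log\operatorname{tr}_\omega\omega_0$, which is not directly comparable to $u$. I therefore expect to follow Song–Weinkove and use $Q=\log u-A\varphi$ with the right side $\le -u+A(-\partial_t\varphi)+C$, combined with the known monotonicity bound $\partial_t\varphi\ge -C$ away from... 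If that fails, the fallback is to use the potential in its time-integrated form, since $\varphi$ itself is what appears. Concretely, I would substitute $Q=\log u-A(\varphi-\,(T-t)\partial_t\varphi\,)$-type combinations whose heat operator contains no $\partial_t\varphi$. Indeed
\[
\Bigl(\partial_t-\Delta\Bigr)\bigl((T-t)\partial_t\varphi+\varphi\bigr)
\]
involves only traces of fixed forms, by differentiating the Monge–Ampère equation. I would use this combination together with its uniform upper bound to close the estimate. At a maximum point this gives $u\le C$, and then $Q\le C$. With $\varphi$ bounded, $u\le C$ everywhere, which is equivalent to $\omega\ge C^{-1}f^*\omega_N$.
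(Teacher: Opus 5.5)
Your final route is correct, and it is the standard argument behind the cited Song--Weinkove lemma; the paper quotes that lemma without proof. Write $\hat\omega_t=\omega_0+t\chi$ with $\chi=T^{-1}(f^*\omega_N-\omega_0)$. Then $\hat\omega_t+(T-t)\chi=f^*\omega_N$, so $H:=(T-t)\dot\varphi+\varphi+nt$ satisfies $(\partial_t-\Delta)H=\operatorname{tr}_\omega f^*\omega_N=u$. Hence $Q=\log u-AH$ with $A=C_0+1$ satisfies $(\partial_t-\Delta)Q\le -u\le 0$.

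Everything before that should be discarded rather than kept as a fallback. With $Q=\log u-A\varphi$ the term $-A\dot\varphi$ cannot be controlled, because no uniform bound $\dot\varphi\ge -C$ is available; the "monotonicity bound" you invoke does not exist, and the identity above only gives $\dot\varphi\ge -C/(T-t)$. The lower bound $\varphi\ge -C$ is not needed at all. Pluripotential $C^0$ estimates would in any case require $[f^*\omega_N]$ to be big, which the lemma does not assume. Also, $\varphi-(T-t)\dot\varphi$ has the wrong sign: only $\varphi+(T-t)\dot\varphi$ removes the $\dot\varphi$ term.

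As written, the closing step has a gap. You drop the $nt$, so you only have $(\partial_t-\Delta)Q\le -u+An$. At a maximum point this gives $u\le An$, but concluding "then $Q\le C$" requires a \emph{lower} bound on $\varphi+(T-t)\dot\varphi$ at that point. Bounds on $\varphi$ alone do not give this, since $\dot\varphi$ is not bounded below. There are two easy fixes:
\begin{itemize}
\item keep the $nt$, so the maximum principle directly gives $Q\le\sup_M Q(\cdot,0)\le C$; or
\item obtain $H\ge -C$ from the minimum principle applied to $(\partial_t-\Delta)H=u\ge0$.
\end{itemize}
After that, $\log u\le C+AH$ needs only an \emph{upper} bound on $H$, that is, $\varphi\le C$ and $\dot\varphi\le C$, not "$\varphi$ bounded".

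Finally, $\dot\varphi\le C$ does not follow from $\hat\omega_t\le C\omega_0$ by a direct maximum principle. The equation $(\partial_t-\Delta)\dot\varphi=\operatorname{tr}_\omega\chi$ contains the unknown term $T^{-1}\operatorname{tr}_\omega f^*\omega_N$. The standard argument instead uses $(\partial_t-\Delta)(t\dot\varphi-\varphi-nt)=-\operatorname{tr}_\omega\omega_0\le 0$, which gives $t\dot\varphi\le\varphi+nt\le C$, combined with smoothness of the flow on $[0,T/2]$.
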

The above parabolic Schwarz lemma can induce the following localization lemma.

\begin{lemma}\label{lem:localization}
    Let \(K\Subset X\) and let \(\Phi_i:K\to M\) be Cheeger--Gromov
embeddings at time \(-1\). Then
\[
        \pi(\Phi_i(K))
        \subset B_{\omega_Y}(p,C_K \lambda_i^{-1/2})
\]
for all sufficiently large \(i\). The analogous assertion holds uniformly
on compact spacetime subsets of the limiting ancient flow.
\end{lemma}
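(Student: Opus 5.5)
The idea is to combine the parabolic Schwarz lemma (Lemma~\ref{Schwarz}) with the parabolic rescaling. I apply Lemma~\ref{Schwarz} with $N=Y$ and $f=\pi$. Hypothesis \eqref{2} gives exactly $[\omega_0]-Tc_1(M)=[\pi^*\omega_Y]$, so there is a uniform $C$ with
\[
\omega(t)\ge C^{-1}\pi^*\omega_Y\quad\text{on } M\times[0,T).
\]
Rescaling to $g_i(s)=\lambda_i g(T+s/\lambda_i)$ turns this into $\omega_i(s)\ge C^{-1}\lambda_i\,\pi^*\omega_Y$. Equivalently, for every tangent vector $v$,
\[
|d\pi(v)|_{\omega_Y}\le C^{1/2}\lambda_i^{-1/2}|v|_{g_i(s)}.
\]
So $\pi$ is $C^{1/2}\lambda_i^{-1/2}$-Lipschitz from $(M,g_i(s))$ to $(Y,\omega_Y)$, at least along curves.

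Next I use the Cheeger--Gromov convergence at time $-1$. Since $K$ is compact, fix a slightly larger relatively compact connected open set $K'\Supset K$ containing $x_\infty$. For large $i$, $\Phi_i^*g_i(-1)\to g_\infty(-1)$ smoothly on $K'$, so $\Phi_i^*g_i(-1)\le 2g_\infty(-1)$ there. Any point $y\in K$ is joined to $x_\infty$ by a curve $\gamma$ inside $K'$ of $g_\infty(-1)$-length at most $D_K$, a constant depending only on $K$. Hence $\Phi_i\circ\gamma$ has $g_i(-1)$-length at most $2D_K$. Its image under $\pi$ then has $\omega_Y$-length at most $2C^{1/2}D_K\lambda_i^{-1/2}$, which gives
\[
d_{\omega_Y}\bigl(\pi(\Phi_i(y)),\pi(\Phi_i(x_\infty))\bigr)\le 2C^{1/2}D_K\lambda_i^{-1/2}.
\]

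The base point is $\Phi_i(x_\infty)=x$ (or a point converging to $x\in E$). In the convention where the base point is fixed at $x\in E$, we have $\pi(x)=p$, since $\pi(E)=p$, and the claim follows with $C_K=2C^{1/2}D_K$. If instead the base points are only close in $g_i$-distance, say $d_{g_i(-1)}(\Phi_i(x_\infty),x)\to0$, then the same Lipschitz estimate absorbs the discrepancy into the constant.

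For the spacetime version, let $K\times[a,b]\Subset X\times(-\infty,0)$. The Schwarz bound holds for all $s$ simultaneously. Smooth convergence of $\Phi_i^*g_i(s)$ uniformly on $K'\times[a,b]$ gives $\Phi_i^*g_i(s)\le 2g_\infty(s)$ there. Also, $g_\infty(s)$ is uniformly equivalent to $g_\infty(-1)$ on the compact set $K'\times[a,b]$, by bounded curvature and $|\partial_s g|\le |\Ric|$. So the same curve argument at each time gives a uniform constant.

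The only delicate point is connecting points of $K$ to the base point by curves that stay in a region where the Cheeger--Gromov maps are defined and the metrics are uniformly controlled. The enlarged connected set $K'$ handles this, using that $X$ is connected and complete. Everything else is a direct consequence of Lemma~\ref{Schwarz}.
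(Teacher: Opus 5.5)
Your proposal is correct and follows essentially the same route as the paper. Both arguments rescale the parabolic Schwarz bound $\omega(t)\ge C^{-1}\pi^*\omega_Y$ so that $\pi$ becomes $O(\lambda_i^{-1/2})$-Lipschitz along curves, then combine this with Cheeger--Gromov control of $g_i(-1)$-lengths from the base point $x\in E$, where $\pi(x)=p$. The only difference is that you push forward curves lying in a fixed $K'$ instead of invoking distances in $M$, which is a harmless variant.
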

\begin{proof}
After enlarging \(K\), assume every point of \(K\) has
\(\omega_\infty(-1)\)-distance at most \(R_K\) from \(x_\infty\).
Cheeger--Gromov convergence gives $d_{\omega(t_i)}(\Phi_i(x),x)
        \leq 2R_K\lambda_i^{-1/2}$. The Lemma \ref{Schwarz} implies that the projection of every curve has
\(\omega_Y\)-length at most a fixed multiple of its \(\omega(t_i)\)-length.
Since \(\pi(x)=p\), the conclusion follows. The spacetime version is
uniform on compact rescaled time intervals.
\end{proof}

\begin{lemma}
\label{loc}
Suppose that for every compact spacetime set $\mathcal K\Subset X\times(-\infty,0)$, there is a constant \(C_{\mathcal K}<\infty\), independent of \(i\),
such that $\pi\bigl(\Psi_i(\mathcal K)\bigr)
    \subset
    B_{\omega_Y}\bigl(p,C_{\mathcal K}\lambda_i^{-1/2}\bigr)$. Then, for every open neighborhood \(V\subset M\) of \(E\), there
exists \(i_0=i_0(\mathcal K,V)\) such that $\Psi_i(\mathcal K)\subset V\times(-\infty,0)$ for every \(i\ge i_0\).
\end{lemma}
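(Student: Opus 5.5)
The argument is purely point-set topological and uses only that $\pi:M\to Y$ is a proper holomorphic map with $\pi^{-1}(p)=E$. Here $\Psi_i$ denotes the spacetime Cheeger--Gromov maps, with $\Psi_i(x,s)=(\Phi_i^{s}(x),\,T+s/\lambda_i)$, so the spatial component lies in $M$. The statement is about this spatial component. The plan is to show that, for any neighborhood $V$ of $E$, the hypothesis forces the images into $\pi^{-1}$ of a small ball around $p$, and that such preimages lie inside $V$.

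\textbf{Step 1: a preimage of a small ball lies in $V$.} Fix an open neighborhood $V\supset E$. I claim there is $r=r(V)>0$ with
\[
\pi^{-1}\bigl(B_{\omega_Y}(p,r)\bigr)\subset V.
\]
Suppose not. Then there are points $y_k\in M\setminus V$ with $\pi(y_k)\to p$. Since $M$ is compact and $M\setminus V$ is closed, a subsequence converges to some $y\in M\setminus V$. By continuity, $\pi(y)=p$, so $y\in\pi^{-1}(p)=E\subset V$. This is a contradiction. (Equivalently, $\pi(M\setminus V)$ is a compact set not containing $p$, so it has positive $\omega_Y$-distance from $p$.)

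\textbf{Step 2: choose $i_0$.} Given the compact set $\mathcal K$ and its constant $C_{\mathcal K}$ from the hypothesis, pick $i_0$ so that $C_{\mathcal K}\lambda_i^{-1/2}<r(V)$ for all $i\ge i_0$. This is possible because $\lambda_i\to\infty$. Then for every $(x,s)\in\mathcal K$ the spatial point $\Phi_i^{s}(x)$ satisfies
\[
\pi\bigl(\Phi_i^{s}(x)\bigr)\in B_{\omega_Y}(p,r),
\qquad\text{so}\qquad
\Phi_i^{s}(x)\in\pi^{-1}\bigl(B_{\omega_Y}(p,r)\bigr)\subset V.
\]
This gives $\Psi_i(\mathcal K)\subset V\times(-\infty,0)$ in the sense of the statement, with $i_0$ depending only on $C_{\mathcal K}$ and $r(V)$.

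\textbf{Main obstacle.} There is no real analytic obstacle. The only point needing care is bookkeeping: the time coordinate of $\Psi_i$ is either the rescaled time $s$ or the original time $T+s/\lambda_i$, and the containment in $V\times(-\infty,0)$ should be read as a statement about the spatial factor only. Note also that compactness of $M$, equivalently properness of $\pi$, is what makes Step 1 work. The hypothesis of the lemma itself is supplied by Lemma~\ref{lem:localization}.
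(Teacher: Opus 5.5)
Your proof is correct and follows the paper's argument essentially verbatim. The paper also uses compactness of $M\setminus V$ and $\pi^{-1}(p)=E\subset V$ to obtain $r_V>0$ with $\pi^{-1}\bigl(B_{\omega_Y}(p,r_V)\bigr)\subset V$, then takes $i$ large so that $C_{\mathcal K}\lambda_i^{-1/2}<r_V$; your sequential-compactness form of Step 1 is equivalent to the paper's positive-distance formulation, as you yourself note.
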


\begin{proof}
Since \(M\) is compact, the set \(M\setminus V\) is compact. Moreover, $p\notin\pi(M\setminus V)$, because $\pi^{-1}(p)=E\subset V$. Consequently, $d_{\omega_Y}\bigl(p,\pi(M\setminus V)\bigr)>0$. Thus there exists \(r_V>0\) such that $\pi^{-1}\bigl(B_{\omega_Y}(p,r_V)\bigr)\subset V$. By $ \pi\bigl(\Psi_i(\mathcal K)\bigr)
    \subset
    B_{\omega_Y}\bigl(p,C_{\mathcal K}\lambda_i^{-1/2}\bigr)$. Since \(C_{\mathcal K}\) is independent of \(i\) and
\(\lambda_i\to \infty\) for sufficiently large \(i\) one has
$C_{\mathcal K}\lambda_i^{-1/2}<r_V$. The conclusion now follows.
\end{proof}

\subsection{Polarized Fano fibration}
Here we recall the polarized Fano fibration
framework of \cite{SZ}.

A \textit{fibration} is a surjective projective morphism $\pi \colon X \to Y $
between normal varieties such that $\pi_*\mathcal{O}_X = \mathcal{O}_Y$. The following definition is introduced by \cite{collins2019sasaki} to study the canonical metric in Sasaki geometry.
\begin{definition}\cite{collins2019sasaki}
    A \textit{polarized affine cone} $(Y,\xi,T)$ consists of a normal affine variety $Y = \operatorname{Spec} R$ with a compact torus $T$-action admitting a unique fixed point, together with a vector $\xi \in \operatorname{Lie}(T)$ lying in the Reeb cone. Here the latter condition means that for the weight decomposition
\[
R = \bigoplus_{\alpha \in \operatorname{Lie}(T)^*} R_\alpha,
\]
one has $\langle \alpha, \xi \rangle > 0$ whenever $R_\alpha \neq 0$ and $\alpha \neq 0$.
\end{definition}
\begin{definition}[Polarized Fano fibration]
    A polarized Fano fibration $(\pi\colon X \to Y, \xi)$ is a fibration $\pi\colon X \to Y$ with the following properties:
\begin{enumerate}
    \item $\pi\colon X \to Y$ is a Fano fibration. That is, $X$ and $Y$ are normal varieties, $X$ is klt, and that $-K_X$ is $\pi$-ample and $\mathbb{Q}$-Cartier.
    \item $X$ and $Y$ are equipped with a $\pi$-equivariant torus action $T$, and $\xi \in \mathfrak{t} = \operatorname{Lie}(T)$.
    \item $(Y, T, \xi)$ is a polarized affine cone.
\end{enumerate}
\end{definition}
Sun-Zhang proved the following structural theorem via deep algebraic geometric techniques.
\begin{theorem}\cite[Theorem 3.1]{SZ}
    A K\"ahler-Ricci shrinker $(X,g,J,f)$ naturally defines a polarized Fano fibration. In particular, $X$ is quasi-projective.
\end{theorem}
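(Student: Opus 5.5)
The statement just before this point is Sun--Zhang's theorem: a Kähler--Ricci shrinker $(X,g,J,f)$ naturally defines a polarized Fano fibration. The paper takes this as a black box, but I would reconstruct it as follows. The idea is to build an algebraic structure from the holomorphic vector field $JX$ with $X=\nabla^g f$, and from $-K_X$.

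\textbf{Step 1: the affine base.} The soliton vector field $\nabla^g f$ is real holomorphic, and $J\nabla^g f$ is Killing. The closure of its flow in the isometry group gives a compact torus $T$ acting holomorphically, with $\xi=J\nabla^g f\in\operatorname{Lie}(T)$.

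Let $R$ be the ring of holomorphic functions on $X$ of polynomial growth, i.e.\ bounded by $C(1+f)^d$, using $f\sim r^2/4$. Decomposing $R$ into $T$-weights, each weight space is finite dimensional, and the weights pair positively with $\xi$. This follows from the $\bar\partial$/Hörmander $L^2$ theory with weight $e^{-f}$: the drift Laplacian has discrete spectrum and $\xi$ acts on weight spaces with eigenvalue tied to the growth order.

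Next I would show $R$ is finitely generated and normal, and set $Y=\Spec R$. Here I would use the Donaldson--Sun type partial $C^0$ / $L^2$ estimates on the asymptotic cone scale. Alternatively, I would use finite generation of graded rings via a Hilbert-function bound coming from volume growth, which is at most Euclidean for shrinkers. The fixed point of the $T$-action on $Y$ is the maximal ideal of positive weights, so $(Y,T,\xi)$ is a polarized affine cone.

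\textbf{Step 2: the fibration.} Consider polynomial-growth $L^2$ holomorphic sections of $-mK_X$, weighted by $e^{-f}$. Since $\Ric+\sqrt{-1}\partial\bar\partial f=\omega>0$, the metric $e^{-f}\omega^n$ has curvature $\omega$ on $-K_X$. Hörmander's estimate then produces sections that separate points and tangents. This gives an embedding of $X$ into a projective bundle over $Y$, realizing $X=\operatorname{Proj}_Y\bigoplus_m H^0(-mK_X)_{\text{poly}}$.

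This shows that $q:X\to Y$ is projective with $-K_X$ $q$-ample. Normality of $Y$ gives $q_*\mathcal O_X=\mathcal O_Y$. The klt condition is automatic since $X$ is smooth, and $q$ is $T$-equivariant.

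\textbf{Main obstacle.} The hard part is the finite generation of $R$ and of the section ring. This needs uniform control of Hörmander estimates at infinity, where the shrinker geometry is only asymptotically conical or possibly non-conical. I would first try a three-circle / frequency monotonicity argument along the flow of $\nabla f$. This should bound the dimension of degree-$\le d$ spaces by $Cd^{n}$ and reduce finite generation to a Noetherian graded-algebra argument.
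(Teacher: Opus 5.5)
The paper does not prove this statement; it imports it from Sun--Zhang. So the question is whether your sketch stands on its own, and it does not. The step you flag as the main obstacle is exactly where it breaks, and the mechanism you propose there cannot work.

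The first half of Step~1 is fine. For a holomorphic weight vector $u$ with $\nabla f(u)=\lambda u$ one has $\Delta_f u=-\lambda u$, so integrating by parts against $e^{-f}$ gives $\lambda\ge 0$, with equality only for constants. Discreteness of the spectrum of $\Delta_f$ then gives finite-dimensional weight spaces.

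But these properties do \emph{not} imply finite generation, even together with normality and a bound $\dim R_{\le d}\le Cd^{n}$. Consider $A=\mathbb{C}[x,xy,xy^2,xy^3,\dots]=\mathbb{C}+x\,\mathbb{C}[x,y]$. It is a normal domain with a $(\mathbb{C}^*)^2$-action whose weight spaces are one-dimensional, $A_0=\mathbb{C}$, every nonzero weight pairs positively with $\xi=(1,1)$, and $\dim A_{\le d}=O(d^2)$. Yet it is not finitely generated, since every $xy^b$ is indecomposable. So no three-circle or volume-growth bound on the Hilbert function can be upgraded by a ``Noetherian graded-algebra argument''. The Donaldson--Sun alternative presupposes a noncollapsed asymptotic cone, which a general shrinker need not have; $\PP^1\times\mathbb{C}$ already collapses at infinity.

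The missing idea is \emph{properness}, and Step~2 tacitly assumes it. Both $X\cong\operatorname{Proj}_Y\bigoplus_m H^0(-mK_X)_{\mathrm{poly}}$ and $q_*\OO_X=\OO_Y$ require $q:X\to Y$ to be proper, i.e.\ that the common zero locus of the positive-weight elements of $R$ be compact. Normality of $Y$ gives neither. Nor does ``H\"ormander separates points and tangents'' give an embedding, for three reasons:
\begin{enumerate}
\item A singular-weight H\"ormander argument at $x$ needs a power $m\ge m(x)$ depending on the local geometry near $x$, which is not uniform on a noncompact $X$ without curvature bounds.
\item Finite generation of the anticanonical ring over $R$ is the same unproved issue as above.
\item An injective immersion is not a closed embedding.
\end{enumerate}

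What a proof must supply is finitely many $T$-homogeneous $u_1,\dots,u_N\in R$ defining a proper $T_{\mathbb{C}}$-equivariant map $U:X\to\mathbb{C}^N$. Along the flow of $\nabla f$, weight vectors scale by $e^{\lambda_j t}$ with $\lambda_j>0$. This reduces properness to showing that for each $y$ on a compact level set $\{f=A\}$ (with $A$ above the critical values of $f$), some positive-weight element of $R$ is nonzero at $y$, and then invoking compactness of $\{f=A\}$.

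Once $U$ is proper, the rest is standard:
\begin{enumerate}
\item Remmert's proper mapping theorem, algebraicity of analytic sets invariant under a positive-weight $\mathbb{C}^*$-action, and Stein factorization make $Y$ affine, with $R$ finite over the coordinate ring of $U(X)$.
\item Grauert's criterion then makes $-K_X$, which carries the metric $e^{-f}\omega^n$ of curvature $\omega>0$, relatively ample over the Stein base.
\item The torus action algebraizes this to the $\operatorname{Proj}$ description.
\end{enumerate}
Your outline never produces $U$, so it does not yet prove the theorem.
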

The following lemma is well‑known in commutative algebra and will be used to clarify the base $Y$.
\begin{lemma}\label{lem:smooth-cone}
If \((Y,\xi,T)\) is smooth at \(o\), then $Y\simeq\mathbb C^n$ as an affine variety.
\end{lemma}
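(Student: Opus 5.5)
We argue through the coordinate ring of $Y$. Write $Y=\operatorname{Spec}R$, where $R=\bigoplus_{\alpha}R_\alpha$ is the weight decomposition under $T$. The Reeb condition $\langle\alpha,\xi\rangle>0$ for $\alpha\neq0$ lets us define an $\mathbb R_{\ge0}$-grading $R=\bigoplus_{d\ge0}R_{(d)}$ with $R_{(d)}=\bigoplus_{\langle\alpha,\xi\rangle=d}R_\alpha$. The degrees that occur form a discrete subset of $\mathbb R_{\ge0}$, because $R$ is finitely generated by weight vectors. Since $o$ is the unique $T$-fixed point, $R_{(0)}=\mathbb C$. The vertex $o$ then corresponds to the homogeneous maximal ideal $\mathfrak m=\bigoplus_{d>0}R_{(d)}$. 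Indeed, $\mathfrak m$ is $T$-stable and maximal, and uniqueness of the fixed point identifies it with $o$.

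The main step is a graded Nakayama argument. The space $\mathfrak m/\mathfrak m^2$ is finite dimensional and $T$-graded, of dimension $n=\dim Y$ since $Y$ is smooth at $o$. Lift a homogeneous basis to homogeneous elements $x_1,\dots,x_n\in\mathfrak m$, and let $S=\mathbb C[x_1,\dots,x_n]\subset R$. Induction on degree shows $R=S$. Suppose every element of degree $<d$ lies in $S$, and take $r\in R_{(d)}$ with $d>0$. Then $r\in\mathfrak m$, so $r=\sum c_ix_i+m$ with $m\in\mathfrak m^2$ homogeneous of degree $d$. Write $m$ as a sum of products $ab$ with $a,b\in\mathfrak m$ homogeneous of positive degree, hence each of degree $<d$. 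By induction $a,b\in S$, so $r\in S$. Because the degrees are discrete and bounded below by a positive minimum on $\mathfrak m$, this induction is well founded.

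Hence $\mathbb C[X_1,\dots,X_n]\to R$, $X_i\mapsto x_i$, is surjective. Its source and target are both domains of Krull dimension $n$: $R$ is a domain since $Y$ is normal and irreducible. So the kernel is a prime ideal $\mathfrak p$ with $\dim \mathbb C[X]/\mathfrak p=n$, which forces $\mathfrak p=0$. Therefore $R\simeq\mathbb C[X_1,\dots,X_n]$ and $Y\simeq\mathbb C^n$, with $T$ acting linearly through the weights of the $x_i$.

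The only delicate point is the well-foundedness of the induction when the grading is real rather than integral, since $\xi$ may be irrational. I handle it as above: only finitely many generator weights occur, so every degree is a nonnegative integer combination of finitely many positive reals. These degrees form a discrete set, and each $R_{(d)}$ is finite dimensional.
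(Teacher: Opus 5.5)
Your proof is correct and follows essentially the same route as the paper: you lift a homogeneous basis of $\mathfrak m/\mathfrak m^2$, use graded Nakayama by induction on degree, and then a Krull dimension count forces the kernel of $\mathbb C[X_1,\dots,X_n]\to R$ to vanish. The only difference is how you make the degree induction well founded: the paper first replaces $\xi$ by a nearby rational Reeb vector to get an integral grading, while you keep the real $\xi$-grading and note that the degrees are sums of finitely many positive generator degrees, hence discrete — equally valid, and the same finite-generation fact that makes the Reeb cone open in the paper's version.
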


\begin{proof}
Write \(Y=\operatorname{Spec}A\). Since the Reeb cone is open and
rational points are dense in \(\operatorname{Lie}(T)\), a sufficiently
small rational perturbation of \(\xi\), followed by rescaling, defines
an algebraic one-parameter subgroup of \(T_{\mathbb C}\). We therefore
obtain a positive integral grading
\[
        A=\bigoplus_{k\geq0}A_k,
        \qquad
        A_0=\mathbb C,
        \qquad
        \mathfrak m_o=\bigoplus_{k>0}A_k .
\]

Since \(o\) is smooth and \(\dim Y=n\), we have $\dim_{\mathbb C}\mathfrak m_o/\mathfrak m_o^2=n$. Choose homogeneous elements \(z_1,\ldots,z_n\in\mathfrak m_o\) whose
classes form a basis of \(\mathfrak m_o/\mathfrak m_o^2\). By the
graded Nakayama lemma \cite[Tag~0EKB]{stacks-project}, together with
induction on the degree, these elements generate \(A\) as a
\(\mathbb C\)-algebra. Hence we have a surjection $\mathbb C[x_1,\dots,x_n]\rightarrow A$, $x_j\mapsto z_j$. Its kernel is prime because \(A\) is a domain. Since both rings have
Krull dimension \(n\), the kernel has height zero and is therefore
zero. Thus $A\simeq\mathbb C[x_1,\ldots,x_n]$, and consequently \(Y\simeq\mathbb C^n\).

\end{proof}

\section{Analytic part}
\subsection{Extrinsic diameter bound and compact analytic set}
Firstly, we establish the following extrinsic diameter bound.
\begin{lemma}\label{diam}
There is a constant \(D<\infty\), independent of \(i\), such that
\[
        \diam_{\widehat g_i}^{\mathrm{ext}}(E)\leq D.
\]
The same assertion holds uniformly on every compact rescaled time interval
\(s\in[-B,-B^{-1}]\Subset(-\infty,0)\).
\end{lemma}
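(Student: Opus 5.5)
Here $\widehat g_i$ denotes the rescaled metric $\lambda_i g(T-\lambda_i^{-1})$ (time $s=-1$), and $\diam^{\mathrm{ext}}_{\widehat g_i}(E)$ is the supremum over $x,y\in E$ of $d_{\widehat g_i}(x,y)$, measured in $M$. The aim is a bound of the form $d_{g(t)}(x,y)\le C\sqrt{T-t}$ for $x,y\in E$, uniformly in $t$ near $T$. After rescaling by $\lambda_i=(T-t_i)^{-1}$ (or $\lambda_i$ comparable to it, for $s\in[-B,-B^{-1}]$) this gives the claim. The approach is to join any two points of $E\simeq\PP^{n-1}$ by a projective line $L\simeq\PP^1\subset E$ and estimate $L$ intrinsically. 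Two points lie on a common line, so $\diam^{\mathrm{ext}}(E)\le \sup_L \diam_{g(t)|_L}(L)$.

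The key steps are as follows.

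First, compute the area. By \eqref{3}, $\mathrm{Area}_{g(t)}(L)=\int_L\omega(t)=(n-1)(T-t)$, so the area is exactly of the parabolic scale.

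Second, bound the curvature of $L$. By the Type I assumption, $|\Rm(g(t))|\le C/(T-t)$. Since $L$ is a complex curve, it is minimal, and the Gauss equation for complex submanifolds gives $K_L\le \mathrm{Sec}_M\le C/(T-t)$. The second fundamental form only decreases holomorphic sectional curvature.

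Third, obtain a lower injectivity-radius type control, which is the main obstacle. An upper curvature bound plus small area does not by itself bound the diameter, since a long thin cylinder has small area. I will therefore use the ambient geometry instead. Type I together with noncollapsing (noted after \eqref{3}, via Perelman) gives $\mathrm{inj}_{g(t)}\ge c\sqrt{T-t}$ on $M$. A point $x\in L$ and a disk in $L$ of extrinsic radius $r=c'\sqrt{T-t}$ then have $L$-area at least $c''r^2$. This follows from the monotonicity formula for minimal (holomorphic) submanifolds in a ball with bounded geometry, where the ambient metric is uniformly equivalent to Euclidean in harmonic/holomorphic coordinates at scale $\sqrt{T-t}$, with density at least $1$ (Lelong number). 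A covering argument then applies. Take a maximal $2r$-separated set in $L$ for $d_{g(t)}$. The number $N$ of points is at most $\mathrm{Area}(L)/(c''r^2)\le C$. Since $L$ is connected, chaining through balls gives $\diam^{\mathrm{ext}}(L)\le 4rN\le C\sqrt{T-t}$. This bounds extrinsic distances directly, so the intrinsic estimate from the second step is only needed as a fallback.

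Finally, the uniformity. All constants depend only on the Type I constant, the noncollapsing constant and $n$. Hence for $s\in[-B,-B^{-1}]$ the rescaled metric $\lambda_i g(T+s/\lambda_i)$ has $T-t=|s|/\lambda_i$, and the bound becomes $D\sqrt{|s|}\le D\sqrt B$. This gives the assertion uniformly on compact rescaled time intervals. The delicate point is justifying the monotonicity lower bound at scale $\sqrt{T-t}$. I would do this by passing to rescaled metrics with $|\Rm|\le C$ and $\mathrm{inj}\ge c$, where standard monotonicity for holomorphic curves in bounded-geometry Kähler manifolds applies.
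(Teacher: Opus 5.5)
Your argument is correct and is essentially the paper's proof. Both use the same three ingredients: the area fixed by cohomology at the parabolic scale, a monotonicity lower density bound for a complex (hence minimal) submanifold in Type~I bounded geometry, and a maximal separated net chained together by connectedness. The differences are minor: you run the covering on projective lines $L\subset E$ with $\int_L\omega(t)=(n-1)(T-t)$ rather than on $E$ itself with rescaled volume $V_0$, and you make explicit the Perelman no-local-collapsing input ($\mathrm{inj}_{g(t)}\geq c\sqrt{T-t}$) that the paper compresses into ``uniform ambient bounded geometry''.
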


\begin{proof}
We set $\widehat\omega_i:=\omega_i(-1)=\lambda_i\omega(T-1/\lambda_i)$. By (\ref{3}), we have
\begin{equation}\label{3.1}
 \Vol_{\widehat\omega_i}(E)
 =\frac1{(n-1)!}\int_E\widehat\omega_i^{\,n-1}
 =V_0:=
 \frac{(n-1)^{n-1}}{(n-1)!}
 \int_{\PP^{n-1}}h^{n-1}.
\end{equation}

Since \(E\) is a complex hypersurface in the K\"ahler manifold
\((M,\widehat\omega_i)\), it is \(\widehat\omega_i\)-minimal. The Type I condition gives
uniform ambient bounded geometry.
The local monotonicity formula applied in
uniform normal coordinates
\cite[Chapter~8, \S3]{simon2014introduction},
therefore gives constants \(r_0,c>0\), independent of \(i\), such that
\begin{equation}\label{3.2}
     \mathcal H_{\widehat\omega_i}^{2n-2}
    \bigl(E\cap B_{\widehat\omega_i}(x,r)\bigr)
    \geq c\,r^{2n-2}   
\end{equation}
for every \(x\in E\) and \(0<r\leq r_0\).
Choose a maximal $2\delta$-separated collection $\{x_1,\dots,x_N\} \subset E$. The balls $B(x_j,\delta)$ are pairwise disjoint, so (\ref{3.1}) and (\ref{3.2}) imply $Nv \leq V_0$. Maximality gives that $E$ is contained in the union of the finite collection of balls $B(x_j,2\delta)$ for $j=1,...,N$.
The intersection graph of the relatively open sets $E \cap B(x_j,2\delta)$ is connected because $E$ is connected. Therefore
\[
\operatorname{diam}_{\widehat{g}_i}^{\operatorname{ext}}(E) \leq 4N\delta \leq 4\delta V_0/v.
\]
The argument on compact rescaled time intervals is identical.
\end{proof}

Now we construct the compact analytic set in limit space $X$, the main discussion comes from \cite[Lemma 3.1]{CHM}.

\begin{proposition}\label{analy}
    $Z$ is a compact connected analytic set with pure dimension $n-1$ in $X$.
\end{proposition}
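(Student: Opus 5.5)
We pull the divisor $E$ back into the limit space and extract a limit cycle by Bishop compactness; $Z$ is then the support of that cycle. Fix the Cheeger--Gromov data $(X,g_\infty,J_\infty,x_\infty)$ with exhaustion $K_1\Subset K_2\Subset\cdots$ and diffeomorphisms $\Phi_i:K_i\to M$ onto their images, so that $\Phi_i^*\widehat g_i\to g_\infty(-1)$ and $\Phi_i^*J_M\to J_\infty$ smoothly on compact sets. Since $x\in E$ and Lemma \ref{diam} bounds $\diam^{\mathrm{ext}}_{\widehat g_i}(E)\le D$, the divisor satisfies $E\subset B_{\widehat g_i}(x,D)$. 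This ball lies in $\Phi_i(\overline{B_{g_\infty}(x_\infty,2D)})$ for large $i$, so $E_i:=\Phi_i^{-1}(E)$ is defined and lies in the fixed compact set $K:=\overline{B_{g_\infty}(x_\infty,2D)}$.

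Each $E_i$ is a compact connected complex hypersurface with respect to $J_i:=\Phi_i^*J_M$, not $J_\infty$. It is also a minimal (calibrated) submanifold for the Kähler metric $\Phi_i^*\widehat g_i$. By \eqref{3.1}, its mass is $\int_{E_i}\omega_i^{n-1}/(n-1)!=V_0$. Hence the integral currents $[E_i]$ have uniformly bounded mass, they are cycles (so boundary mass is zero), and their supports lie in $K$.

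By Federer--Fleming compactness, a subsequence converges to an integral $(2n-2)$-cycle $\mathcal Z$ supported in $K$. For the analytic structure I would argue in one of two ways.
\begin{itemize}
\item[(a)] Use that $[E_i]$ is a positive closed $(n-1,n-1)$-current for $J_i$ with $J_i\to J_\infty$. The limit is then a positive closed current of bidimension $(n-1,n-1)$ for $J_\infty$ that is also integral (rectifiable with integer multiplicities). King's/Harvey--Shiffman's structure theorem then makes $\mathcal Z=\sum m_k[Z_k]$ a holomorphic chain with irreducible $(n-1)$-dimensional analytic components $Z_k$.
\item[(b)] Alternatively, argue as in \cite[Lemma 3.1]{CHM}: work in holomorphic charts adapted to $J_\infty$ and view the $E_i$ as uniformly area-bounded almost-complex hypersurfaces, then apply Bishop's theorem.
\end{itemize}
Mass is continuous here because the currents are calibrated by $\omega_i^{n-1}/(n-1)!\to\omega_\infty^{n-1}/(n-1)!$, so $\mathbf M(\mathcal Z)=V_0>0$. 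In particular $Z=\operatorname{Supp}\mathcal Z$ is nonempty, compact, and of pure dimension $n-1$.

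\emph{Connectedness.} Supports converge in the Hausdorff sense: the monotonicity lower bound \eqref{3.2} holds uniformly in $i$ and passes to varifold limits, so every point of $\lim E_i$ carries mass and lies in $Z$. Conversely, points of $Z$ are limits of points of $E_i$. Each $E_i$ is connected, and a Hausdorff limit of connected compact sets inside a compact set is connected. Therefore $Z$ is connected.

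The main obstacle is step (a)/(b), namely handling the varying complex structures $J_i$. The currents $[E_i]$ are holomorphic only for $J_i$, so Bishop's theorem does not apply verbatim. My first attempt would be the positivity argument: pair the currents with $J_\infty$-positive test forms, correct by errors of size $\|J_i-J_\infty\|_{C^0(K)}\cdot V_0\to0$, conclude that $\mathcal Z$ is $J_\infty$-positive and closed, and then invoke the Siu/King structure theorem for integral positive closed currents. A secondary point is to make sure no mass escapes, which follows from the uniform support bound in $K$.
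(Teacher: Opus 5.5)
Your argument is correct, but route (a) is not how the paper proves this proposition; it is essentially the argument the paper gives one step later, in Proposition~\ref{cycle}.

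\textbf{The paper's route.} The paper defines $Z$ directly as the Hausdorff limit of the $E_i\subset K$, so nonemptiness and connectedness are immediate. Analyticity comes from Bishop's theorem applied in the charts of \cite[Lemma~A.1]{CHM}. These are coordinates $z_i$ that are $J_i$-holomorphic, which is possible because each $J_i=\Phi_i^*J_M$ is integrable, and that converge smoothly to $J_\infty$-holomorphic coordinates $z_\infty$. In these charts the sets $z_i(E_i\cap U)$ are genuine analytic hypersurfaces with locally bounded volume, and they converge in the Hausdorff sense to $z_\infty(Z\cap U)$. This is exactly how the obstacle you flag is removed. Your option (b) as written, with charts adapted only to $J_\infty$, would not make the $E_i$ analytic, so Bishop's theorem would not apply there.

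\textbf{Your route (a).} You define $Z=\operatorname{Supp}\mathcal Z$ and use Federer--Fleming plus King (or Harvey--Shiffman). This needs only $C^0$ convergence $J_i\to J_\infty$ to control the type and positivity of the limit current. It also produces at once the multiplicities and the identity $\mathbf M(\mathcal Z)=V_0$, which the paper needs later in Proposition~\ref{prop:saturation}. The price is that connectedness is no longer free: you must identify $\operatorname{Supp}\mathcal Z$ with the Hausdorff limit of the $E_i$.

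\textbf{The one imprecise step.} Saying the monotonicity bound ``passes to varifold limits'' is not enough by itself, because the mass of a weak current limit can drop through cancellation. What closes the step is the localized form of your own calibration remark. With $\eta_i=\Phi_i^*\widehat\omega_i$, a cutoff $\chi\equiv1$ near $y$, and $y_i\in E_i$ converging to $y$, Wirtinger gives $[E_i](\chi\,\eta_i^{n-1}/(n-1)!)=\|[E_i]\|_{h_i}(\chi)\geq c\,r^{2n-2}$. By the uniform mass bound and $\eta_i\to\omega_\infty$, the left side converges to $\mathcal Z(\chi\,\omega_\infty^{n-1}/(n-1)!)$. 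Hence $y\in\operatorname{Supp}\mathcal Z$. This is precisely the argument in the paper's proof of Proposition~\ref{cycle}.
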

\begin{proof}
By Lemma~\ref{diam}, there exists \(D<\infty\) such that $\operatorname{diam}^{\mathrm{ext}}_{\widehat g_i}(E)
        \leq D$. Since the Cheeger--Gromov basepoint \(x\) lies in \(E\), $E\subset B_{\widehat g_i}(x,D)$. Choose the Cheeger--Gromov maps $\Phi_i:U_i\longrightarrow V_i\subset M$ with $\Phi_i(x_\infty)=x$ so that $B_{\widehat g_i}(x,D+1)\subset V_i$ for all sufficiently large \(i\). Then $E_i:=\Phi_i^{-1}(E)$ is well-defined and the sets \(E_i\) are contained in a fixed compact
subset \(K\Subset X\). After passing to a subsequence, they converge
in the Hausdorff topology of \(K\) to a nonempty compact set
\(Z\subset K\). Since each \(E_i\) is connected, so is
its Hausdorff limit \(Z\).

Set $h_i:=\Phi_i^*\widehat g_i$ and $J_i:=\Phi_i^*J_M$. Then $(h_i,J_i)\longrightarrow(g_\infty,J_\infty)$ in $C^\infty_{\mathrm{loc}}(X)$, and each \(E_i\) is a \(J_i\)-complex hypersurface. Moreover,
\begin{equation}\label{mmm}
 \mathcal H_{h_i}^{2n-2}(E_i)
 =
 \frac{1}{(n-1)!}
 \int_{E_i}(\Phi_i^*\widehat\omega_i)^{n-1}
 =
 \frac{1}{(n-1)!}
 \int_E\widehat\omega_i^{\,n-1}
 =
 V_0.
\end{equation}

Fix \(y\in Z\). By the convergent holomorphic-coordinate lemma
\cite[Lemma~A.1]{CHM}, after shrinking a neighborhood \(U\) of $y$,
there exist \(J_i\)-holomorphic coordinates \(z_i\) converging
smoothly to a \(J_\infty\)-holomorphic coordinate system
\(z_\infty\). On every relatively compact coordinate subdomain, the
sets \(z_i(E_i\cap U)\) are pure \((n-1)\)-dimensional analytic sets.
The smooth convergence of the coordinates and metrics, together with
\eqref{mmm}, gives the locally uniform volume bound. Since \(E_i\to Z\) locally in the
Hausdorff topology, Bishop's theorem
\cite[Theorem~1.7]{DiederichMazzilli} shows that $z_\infty(Z\cap U)$ is a pure \((n-1)\)-dimensional analytic subset. So \(Z\) is a compact \(J_\infty\)-analytic subset of \(X\)
of pure dimension \(n-1\).
\end{proof}
In fact, we can obatin the following proposition of the convergence as analytic cycles.
\begin{proposition}
\label{cycle}
After passing to a subsequence, there exist distinct irreducible
compact \(J_\infty\)-analytic subvarieties $Z_1,\ldots,Z_N\subset X$ with $\dim_{\mathbb C}Z_\alpha=n-1$, and integers \(m_\alpha\geq1\) such that
\[
        [E_i]\rightharpoonup
        \mathcal Z:=
        \sum_{\alpha=1}^{N}m_\alpha[Z_\alpha]
\]
weakly as currents on \(X\). Moreover,
\[
        Z=\operatorname{Supp}\mathcal Z
          =\bigcup_{\alpha=1}^{N}Z_\alpha,
          \qquad
        \mathbf M_{g_\infty}(\mathcal Z)
        =
        \frac{1}{(n-1)!}
        \left\langle
        \mathcal Z,\omega_\infty^{n-1}
        \right\rangle
        =V_0.  
\]
\end{proposition}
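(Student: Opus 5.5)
The plan is to treat the currents of integration $[E_i]$ as a sequence of closed positive $(1,1)$-currents on the fixed compact set $K$ supplied by the proof of Proposition~\ref{analy}. They have uniformly bounded mass, and we extract a weakly convergent subsequence. Then we identify the limit with Siu's structure theorem, or more directly with the cycle version of Bishop's theorem \cite[Theorem~1.7]{DiederichMazzilli}.

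\textbf{Compactness.} By \eqref{mmm}, $\mathbf M_{h_i}([E_i])=V_0$. Since $h_i\to g_\infty$ smoothly on a neighborhood of $K$, the masses with respect to $g_\infty$ are uniformly bounded. Each $[E_i]$ is a closed current of bidimension $(n-1,n-1)$, and it is positive with respect to $J_i$. Weak compactness of currents with bounded mass gives a subsequence $[E_i]\rightharpoonup T$. Because $J_i\to J_\infty$ smoothly, the limit $T$ is a closed positive $(n-1,n-1)$-dimensional current for $J_\infty$, and it is supported in the Hausdorff limit $Z$.

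\textbf{Identification of the limit.} The argument is local. Near $y\in Z$ we use the convergent holomorphic coordinates $z_i\to z_\infty$ of \cite[Lemma~A.1]{CHM}. In these coordinates the $z_i(E_i\cap U)$ are analytic sets with locally bounded volume, converging in the Hausdorff sense. Bishop's theorem in its cycle form states that a subsequence converges as currents to an effective analytic cycle $\sum m_\alpha[Z_\alpha\cap U]$ with positive integer multiplicities.

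One could instead argue abstractly via Siu's decomposition. Lelong numbers are upper semicontinuous under weak limits, and the $[E_i]$ have integer Lelong numbers $\ge1$ along their supports. Hence $T$ has Lelong number $\ge1$ on $Z$. Since $Z$ is pure $(n-1)$-dimensional by Proposition~\ref{analy}, Siu's theorem shows that $T-\sum \nu_\alpha[Z_\alpha]$ is a positive current supported on $Z$. The support theorem then forces this remainder to be a combination of the $[Z_\alpha]$ itself. The genuinely delicate point is the integrality of the multiplicities. For that I would rely on the local Bishop–Stoll statement, in which the multiplicity equals the sheet number of the branched covering given by a generic linear projection, and this number is preserved in the limit.

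Gluing over a finite cover of the compact set $Z$ gives $T=\mathcal Z=\sum m_\alpha[Z_\alpha]$. The $Z_\alpha$ are the finitely many irreducible components of $Z$; there are finitely many because $Z$ is compact. Each $Z_\alpha$ is compact because it is closed in the compact set $Z$.

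\textbf{Support and mass.} Every point of $Z$ is a limit of points of $E_i$, and by the monotonicity lower bound \eqref{3.2}, transported to $h_i$ via bounded geometry, each such limit carries mass at least $c r^{2n-2}$ in every small ball. Therefore the support of $\mathcal Z$ is all of $Z$, with no component missing; that is, $m_\alpha\ge1$ for each component.

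For the mass, choose a cutoff $\chi\equiv1$ on a neighborhood of $K$ with compact support. Then $\frac1{(n-1)!}\int_{E_i}\chi\,\Phi_i^*\widehat\omega_i^{n-1}=V_0$. Since $\Phi_i^*\widehat\omega_i\to\omega_\infty$ smoothly on the support of $\chi$, weak convergence passes to the limit and gives $\frac1{(n-1)!}\langle\mathcal Z,\omega_\infty^{n-1}\rangle=V_0$. By Wirtinger this quantity equals $\mathbf M_{g_\infty}(\mathcal Z)$. Here the compactness of the supports, from Lemma~\ref{diam}, is essential, since it ensures no mass escapes to infinity.

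I expect the main obstacle to be the integrality and positivity of the multiplicities, i.e.\ the cycle-level Bishop theorem for varying complex structures $J_i$. I would handle it entirely in the convergent coordinates, so that the classical statement applies verbatim.
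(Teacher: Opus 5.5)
Your proposal is correct, but it reaches the cycle structure by a different route than the paper.

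\textbf{The paper's route.} It works in geometric measure theory. It treats $[E_i]$ as integral currents with $\partial[E_i]=0$ and applies Federer--Fleming compactness, so the limit $T$ is automatically an integral current. King's theorem (closed positive integral currents of bidimension $(n-1,n-1)$ are holomorphic chains) then gives $T=\sum m_\alpha[Z_\alpha]$ with $m_\alpha\in\mathbb N_{>0}$. Only afterwards does it identify $\operatorname{Supp}\mathcal Z$ with $Z$: Hausdorff convergence gives one inclusion, and Lelong monotonicity with a calibrated cutoff gives the other.

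\textbf{Your route.} You take a weak-$*$ limit of positive currents and use Proposition~\ref{analy} as input. Since $T$ is closed, positive and supported on the pure $(n-1)$-dimensional analytic set $Z$, the support theorem already gives $T=\sum c_\alpha[Z_\alpha]$ with real $c_\alpha\ge0$. The Siu decomposition step is therefore unnecessary. You get $c_\alpha\ge1$ from upper semicontinuity of Lelong numbers in the convergent coordinates. You get $c_\alpha\in\mathbb Z$ near a generic smooth point of $Z_\alpha$: under a proper local projection $\varpi$, each $\varpi_*[E_i]$ is an integer sheet number times the current of the base domain, and $\varpi_*$ is continuous under the weak convergence.

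\textbf{What each buys.} The paper's argument is shorter and does not need the analyticity of $Z$ in advance; King's theorem together with $\operatorname{Supp}\mathcal Z=Z$ would in fact reprove Proposition~\ref{analy}. The price is two deep GMT theorems. Yours stays within classical complex analysis and makes the multiplicities concrete as limiting sheet numbers. It also gets $\operatorname{Supp}\mathcal Z=Z$ and $m_\alpha\ge1$ at once from $\nu(T,y)\ge1$ for all $y\in Z$.

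\textbf{Two small cautions.} First, check that the Bishop-type reference you cite actually includes multiplicities. If it does not, your proof rests on the support-theorem and sheet-number argument, which is complete. Second, the lower bound \eqref{3.2} passes to the limit only through closed balls or a cutoff, as the paper does with $\chi$. Mass on open sets is merely lower semicontinuous under weak convergence, so a bound on open balls does not transfer directly.
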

\begin{proof}
     Let $T_i:=[E_i]$ be the integral current of integration over \(E_i\). Since \(E_i\) is compact and has no
boundary, $\partial T_i=0$. Moreover, \(\operatorname{supp}T_i\subset K\), and by
\eqref{mmm}, we know $\mathbf M_{h_i}(T_i)=V_0$. Since \(h_i\to g_\infty\) smoothly on the fixed compact set \(K\), the
metrics \(h_i\) and \(g_\infty\) are uniformly equivalent there.
Consequently, $\sup\nolimits_{i}\mathbf M_{g_\infty}(T_i)<\infty$. The Federer--Fleming compactness theorem for integral currents
\cite[4.2.17]{Federer} therefore gives, after passing to a further
subsequence, an integral current \(T\) such that $T_i\rightharpoonup T$ and $\partial T=0$. Because \(T_i\) is \(J_i\)-complex and
\(J_i\to J_\infty\) smoothly on \(K\), the limiting current \(T\) is
a positive integral current of bidimension \((n-1,n-1)\) with respect
to \(J_\infty\). King's characterization of positive holomorphic
chains \cite{King} or \cite[Corollary 3.10.2]{TehYang}  then implies that
\[
        T=\sum_{\alpha=1}^{N}m_\alpha[Z_\alpha],
        \qquad
        m_\alpha\in\mathbb N_{>0},
\]
where the \(Z_\alpha\) are distinct irreducible
\((n-1)\)-dimensional \(J_\infty\)-analytic subvarieties of \(X\).
Since \(\operatorname{supp}T\subset K\), only finitely many
irreducible components occur. We denote this limiting analytic cycle
by $\mathcal Z:=T$.

We next identify the support of the limiting cycle. The Hausdorff
convergence \(E_i\to Z\) immediately gives $\operatorname{Supp}\mathcal Z\subset Z$. Indeed, every compactly supported test form in \(X\setminus Z\)
vanishes on \(E_i\) for all sufficiently large \(i\). 

Conversely, fix \(y\in Z\), and choose \(y_i\in E_i\) with
\(y_i\to y\). Set \(p=n-1\) and $\eta_i:=\Phi_i^*\widehat\omega_i$. For every fixed sufficiently small \(r>0\), the Lelong monotonicity
formula \cite[Chapter~III, \S5, Consequence~(5.8)]{Dema} in the convergent holomorphic coordinates, together with the
uniform equivalence of the coordinate metrics, gives 
\begin{align*}
        \mathbf M_{h_i} \bigl(T_i\llcorner B_{g_\infty}(y,r)\bigr) \geq c r^{2p}
\end{align*}
for sufficiently large \(i\), where \(c>0\) is independent of
\(i\). Choose \(\chi\in C_c^\infty(B_{g_\infty}(y,2r))\) with
\(0\leq\chi\leq1\) and \(\chi\equiv1\) on
\(B_{g_\infty}(y,r)\). Since \(T_i=[E_i]\) is calibrated by
\(\eta_i^p/p!\), we have
\begin{align*}
        T_i\left(\chi({\eta_i^p}/{p!})\right) \geq c r^{2p}.
\end{align*}
 Using \(T_i\rightharpoonup T\), the smooth convergence
\(\eta_i\to\omega_\infty\), and the uniform mass bound for \(T_i\),
we may pass to the limit and obtain 
\begin{align*}
        T\left(\chi({\omega_\infty^p}/{p!})\right) \geq c r^{2p}>0.
\end{align*}
Thus \(T\) is nonzero in every neighborhood of \(y\), so
\(y\in\operatorname{Supp}\mathcal Z\). Therefore $\operatorname{Supp}\mathcal Z=Z$.

Finally, the smooth Cheeger--Gromov convergence gives $\Phi_i^*\widehat\omega_i\longrightarrow\omega_\infty$
in $C^\infty(K)$. Choose \(\chi_0\in C_c^\infty(X)\) such that $0\leq\chi_0\leq1$ and $\chi_0\equiv1$  on a neighborhood of the fixed compact set \(K\) containing
\(E_i\) and \(Z\). Since \(\mathcal Z\) is supported in \(K\), the weak convergence \(T_i\rightharpoonup\mathcal Z\) then gives
\begin{align*}
 \frac{1}{(n-1)!}
 \left\langle\mathcal Z,\omega_\infty^{n-1}\right\rangle
 =
 \lim_{i\to\infty}
 \frac{1}{(n-1)!}
 \left\langle T_i,
        \chi_0\omega_\infty^{n-1}\right\rangle
 =
 \lim_{i\to\infty}
 \frac{1}{(n-1)!}
 \left\langle T_i,
        \bigl(\Phi_i^*\widehat\omega_i\bigr)^{n-1}\right\rangle
 =V_0.
\end{align*}
Therefore $\mathbf M_{g_\infty}(\mathcal Z)=V_0$.
\end{proof}

\subsection{Logarithmic Ricci function and maximal compact analytic set}
We first review the logarithmic Ricci function introduced by \cite[Section 3]{CHM}. Choose a smooth positive volume form \(\Omega_Y\) on \(Y\) such that $\theta:=\Ric(\Omega_Y)$ vanishes near \(p\). Define
\[
        u_t=\log\frac{\pi^*\Omega_Y}{\omega_t^n},
\]
which is smooth on $(M\setminus E)\times[0,T]$, but has logarithmic singularity along $E$. In ordinary blow-up coordinates, $\pi(z_1,\ldots,z_n)=(z_1,z_1z_2,\ldots,z_1z_n)$, so the complex Jacobian vanishes to order \(n-1\) along \(E\).

\begin{proposition}\cite[Proposition 3.2]{CHM}\label{heat}
    \begin{enumerate}
\item  There is a defining section $s\in H^0 (M,\mathcal{O}_{M}(E)$ and a smooth time-dependent Hermitian metric $h_t$ on $\mathcal{O}_{M}(E)$ such that ${u}_t =(n-1) \log |s|_{h_t}^2$.
\item In the sense of currents, $\sqrt{-1} \partial \bar{\partial} u_t =\operatorname{Ric}_{{\omega}_t} - \pi^{\ast} \theta +2\pi (n-1)[E]$, where $[E]$ is the current of integration along $E$.
\item $ (\partial_t-\Delta_{\omega_t})u_t
=
 \operatorname{tr}_{\omega_t}\pi^*\theta
 -2\pi(n-1)\mathcal H^{2n-2}_{g_t}|_E $, where $\mathcal{H}_{{g}_t}^{2n-2}$ denotes the $(2n-2)$-dimensional Hausdorff measure with respect to the metric $d_{g_t}$.
\item For any $\epsilon>0$, there exists $C(\epsilon)>0$ such that any $(x,t) \in ({M}\setminus {E})\times [\frac{T}{2},T)$ with 
$$P^{\ast-}(x,t;\epsilon \sqrt{T-t}) \cap ({E} \times [0,T)) =\emptyset$$ satisfies 
$|\nabla {u}_t|(x) \leq \frac{C(\epsilon)}{\sqrt{T-t}}$.
\end{enumerate}
\end{proposition}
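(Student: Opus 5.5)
We prove the four items in order, each feeding the next, following the scheme of \cite[Section 3]{CHM}.

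\emph{Item 1.} Work in local holomorphic coordinates. Away from \(E\), \(\pi\) is a biholomorphism, so \(\pi^*\Omega_Y/\omega_t^n\) is smooth and positive. Near a point of \(E\), use the blow-up chart \(\pi(z)=(z_1,z_1z_2,\dots,z_1z_n)\). There the complex Jacobian is \(z_1^{n-1}\), so
\[
\pi^*\Omega_Y = |z_1|^{2(n-1)}\, e^{\psi}\,\frac{(\sqrt{-1})^n dz\wedge d\bar z}{\text{(normalized)}},
\]
with \(\psi\) smooth. Since \(\omega_t^n\) is a smooth positive volume form on \(M\), we get \(u_t=(n-1)\log|z_1|^2+\text{smooth}\). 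Because \(z_1\) is a local defining function of \(E\), set
\[
|s|^2_{h_t}:=\Bigl(\frac{\pi^*\Omega_Y}{\omega_t^n}\Bigr)^{1/(n-1)} .
\]
This glues to a smooth Hermitian metric on \(\mathcal O_M(E)\): on overlaps the transition functions of \(\mathcal O(E)\) are exactly the ratios of local defining functions. Equivalently, \(h_t\) is the metric induced via \(K_M=\pi^*K_Y+(n-1)E\) from \(\Omega_Y\) and \(\omega_t^n\). It depends smoothly on \(t\) up to \(T\) wherever \(\omega_t\) does, and in any case for \(t<T\), which is all that is used.

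\emph{Item 2.} Apply Poincaré--Lelong, \(\sqrt{-1}\partial\bar\partial\log|s|_h^2=2\pi[E]-\Theta_h\). Then
\[
\sqrt{-1}\partial\bar\partial u_t=-\sqrt{-1}\partial\bar\partial\log\omega_t^n+\sqrt{-1}\partial\bar\partial\log\pi^*\Omega_Y .
\]
On \(M\setminus E\) this equals \(\Ric(\omega_t)-\pi^*\theta\). Globally, item 1 shows the only extra contribution is \((n-1)\cdot 2\pi[E]\). The sign conventions are fixed by \(\Ric=-\sqrt{-1}\partial\bar\partial\log\det\).

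\emph{Item 3.} Differentiate in time: \(\partial_t u_t=-\partial_t\log\omega_t^n=-\operatorname{tr}_{\omega_t}(-\Ric)=R_t\). Take the \(\omega_t\)-trace of item 2 in the sense of currents. This gives
\[
\Delta u_t=R_t-\operatorname{tr}_{\omega_t}\pi^*\theta+2\pi(n-1)\,\operatorname{tr}_{\omega_t}[E].
\]
The trace of \([E]\) against \(\omega_t^n/n!\) is \([E]\wedge\omega_t^{n-1}/(n-1)!\). By Wirtinger this is the Hausdorff measure \(\mathcal H^{2n-2}_{g_t}|_E\), with the paper's normalization of \(\Delta\). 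Subtracting yields the stated equation.

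\emph{Item 4.} This gradient estimate is the main obstacle. The plan is a rescaling and contradiction argument.
\begin{itemize}
\item Suppose it fails. Then there are \((x_i,t_i)\) with \(|\nabla u|\sqrt{T-t_i}\to\infty\) and parabolic backward neighborhoods \(P^{*-}(x_i,t_i;\epsilon\sqrt{T-t_i})\) disjoint from \(E\).
\item Rescale by \(\lambda_i=(T-t_i)^{-1}\). Type I gives uniform curvature bounds, and noncollapsing gives injectivity radius bounds, so the rescaled flows have bounded geometry on the backward parabolic ball of size \(\epsilon\).
\item On that region \(\tilde u_i=u-u(x_i,t_i)\) satisfies \((\partial_s-\Delta)\tilde u_i=\lambda_i^{-1}\operatorname{tr}\pi^*\theta\). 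The right side vanishes near \(E\), and is bounded elsewhere by Lemma~\ref{Schwarz}, since \(\operatorname{tr}_{\omega}\pi^*\theta\le C\).
\item Bernstein/Shi-type local gradient estimates (\(\partial_t-\Delta\) applied to \(|\nabla u|^2\) plus a cutoff) then bound \(|\nabla\tilde u_i|\) by the oscillation of \(\tilde u_i\) on the parabolic ball.
\end{itemize}
The real difficulty is controlling that oscillation. I would try two routes.
\begin{itemize}
\item Use the Harnack inequality or the heat-kernel representation for \(u\). Its singular term \(-2\pi(n-1)\mathcal H^{2n-2}|_E\) has a sign, and it contributes only a smooth kernel-integral term, because \(E\) is at distance at least \(\epsilon\sqrt{T-t}\) in spacetime.
\item Alternatively, use \(\partial_t u=R\), which is bounded by \(C/(T-t)\), together with \(u=(n-1)\log|s|^2_{h_t}\). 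Bounded geometry then bounds \(\log|s|^2\) up to an additive constant on balls at scaled distance at least \(\epsilon\) from \(E\), since \(s\) is holomorphic and nonvanishing there and the curvature of \(h_t\) is controlled by \(\Ric\).
\end{itemize}
Either route gives oscillation \(\le C(\epsilon)\), hence \(|\nabla u|\le C(\epsilon)/\sqrt{T-t}\) after scaling back.
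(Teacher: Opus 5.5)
Your items 1--3 are correct and match the paper's sketch: the blow-up chart, Poincar\'e--Lelong, and tracing the current identity. The gap is in item 4. Reducing, via a local gradient estimate on the rescaled backward region, to an oscillation bound $\operatorname{osc} u\le C(\epsilon)$ is fine. But that oscillation bound is the entire content of the estimate, and neither of your routes supplies it.

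\emph{Route 2 fails as stated.} On a ball where $s$ is holomorphic and nonvanishing, control of the curvature of $h_t$ determines $\log|s|^2_{h_t}$ only up to a pluriharmonic function. Pluriharmonic functions on a unit ball of bounded geometry, such as $2N\,\mathrm{Re}\,z_1$, have arbitrarily large oscillation. The crude global information does not help either: you have $u\le C$ from Lemma~\ref{Schwarz}, and $\partial_tu=R$ with $|R|\le C/(T-t)$, but integrating in time loses a factor $\log\frac1{T-t}$.

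\emph{Route 1 misidentifies the mechanism.} The $P^{*-}$-separation keeps $E$ away from $(x,t)$ only for $s\in[t-c\epsilon^2(T-t),t]$. The backward heat kernel based at $(x,t)$ charges $E$ at all earlier times. The sign of the singular term gives only an upper bound on $u$. Harnack applied to $C-u\ge0$ bounds the oscillation only by a multiple of $C-u(x,t)$, which is not a priori bounded.

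What is missing is the argument the paper takes from CHM. Write $u_t$ by Duhamel's formula against the heat kernel $K(x,t;y,s)$. This gives three terms:
\begin{itemize}
\item an initial term with $u_0\in L^1$;
\item a source term with $\operatorname{tr}_{\omega_s}\pi^*\theta$ bounded by Lemma~\ref{Schwarz};
\item the singular term $-2\pi(n-1)\int_0^t\int_E K\,d\mathcal H^{2n-2}_{g_s}\,ds$.
\end{itemize}
Then differentiate in $x$ directly. Use the Type~I Gaussian gradient bound $|\nabla_xK|\le C(t-s)^{-n-\frac12}\exp\bigl(-d_s(z_s,y)^2/(10(t-s))\bigr)$, where $z_s$ is an $H$-center of $(x,t)$.

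The decisive quantitative input, which your proposal never uses, is that the divisor shrinks. By \eqref{3}, $\mathcal H^{2n-2}_{g_s}(E)=V_0(T-s)^{n-1}$. Split the time integral at $t-s=c\epsilon^2(T-t)$. The late part is controlled by the exponential factor through the separation $d_s(z_s,E)\gtrsim\epsilon\sqrt{T-t}$. The early part is bounded by
\[
C\int_{c\epsilon^2(T-t)}^{\infty}\bigl((T-t)+\tau\bigr)^{n-1}\tau^{-n-\frac12}\,d\tau\le C\epsilon^{1-2n}(T-t)^{-1/2}.
\]
Without the factor $(T-s)^{n-1}$ this would be of order $(\epsilon^2(T-t))^{\frac12-n}$, which is far too large for $n\ge2$. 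So the volume decay of $E$, coming from the cohomological assumption, is exactly what makes the estimate hold at scale $\sqrt{T-t}$.

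A smaller point: $P^{*-}$ is Bamler's $P^*$-neighborhood, not a metric parabolic ball. Passing between the two requires his comparison results under the Type~I bound.
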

\begin{proof}
Parts 1-3 follow from the local expression of the blow-down
map, the Poincaré--Lelong formula, and taking the trace of the
resulting current identity.

For part 4, we follow the proof of
\cite[Proposition~3.2(iv)]{CHM}. The only dimensional changes are
that the heat-kernel gradient estimate in real dimension \(2n\)
contains the factor \((t-s)^{-n-\frac12}\). Consequently, the singular term is bounded by
\[
 C\int_0^t
 \frac{(T-s)^{n-1}}{(t-s)^{n+\frac12}}
 \exp\left(
   -\frac{d_s(z_s,E)^2}{10(t-s)}
 \right)ds.
\]
Splitting the integral at
\(t-c\epsilon^2(T-t)\) and using the \(P^{\ast-}\)-separation
exactly as in \cite[Proposition~3.2(iv)]{CHM} yields $|\nabla u_t|_{g_t}(x)
       \le C_\epsilon(T-t)^{-1/2}$.

\end{proof}

Then we show that $E$ is the maximal compact analytic subset of $X$. This proof follows the proof of \cite[Proposition~5.1]{CHM}.
\begin{proposition}\label{maximalcompact}
    Every positive-dimensional compact analytic subvariety of
\(X\) is contained in \(Z\).
\end{proposition}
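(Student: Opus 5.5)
Let $V\subset X$ be an irreducible compact analytic subvariety of dimension $k\ge1$, and suppose for contradiction that $V\not\subset Z$. Since $V$ is irreducible and $Z$ is analytic, $V\cap Z$ is a proper analytic subset of $V$. We may therefore pick a point $y\in V\setminus Z$ and a small $\delta>0$ with $B_{g_\infty}(y,2\delta)\cap Z=\emptyset$. The idea, following \cite[Proposition~5.1]{CHM}, is to pass the logarithmic Ricci function $u_t$ of Proposition~\ref{heat} to the limit as a global plurisubharmonic-type potential on $X\setminus Z$ whose $\sqrt{-1}\partial\bar\partial$ is strictly positive. Restricting it to $V$ and using compactness of $V$ then gives a contradiction via the maximum principle.

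\textbf{Rescaling and uniform bounds.} Set $\widehat u_i:=\Phi_i^*\bigl(u_{T-s/\lambda_i}\bigr)$ on the pulled-back spacetime region. By Lemma~\ref{lem:localization} and Lemma~\ref{loc}, the image $\Phi_i(\mathcal K)$ of any compact spacetime set lies, for large $i$, in a neighborhood of $E$ on which $\theta\equiv0$. Hence on $\Phi_i^{-1}(\cdot)$ part~2 of Proposition~\ref{heat} reads
\[
\sqrt{-1}\partial\bar\partial \widehat u_i=\Ric(\Phi_i^*\omega_i)+2\pi(n-1)[E_i].
\]
Away from $Z$, the sets $E_i$ stay at definite distance: Hausdorff convergence gives $d(E_i,\mathcal K)\ge c>0$ for compact $\mathcal K\subset X\setminus Z$. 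Rescaled, this is exactly the $P^{*-}$-separation hypothesis of part~4, provided we also control the backward parabolic neighborhood. That control should follow from the spacetime version of Lemma~\ref{diam} and Proposition~\ref{analy} applied on compact time intervals, since $E$ evolves only by a metric change. Part~4 then yields uniform gradient bounds $|\nabla\widehat u_i|\le C$ on compact subsets of $X\setminus Z$, uniform in $i$.

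\textbf{Limit potential.} Normalize $\tilde u_i:=\widehat u_i-\widehat u_i(y_0)$ for a fixed $y_0\in X\setminus Z$. The equation $\sqrt{-1}\partial\bar\partial\tilde u_i=\Ric$ with smoothly converging $\Ric$, together with the gradient bounds and elliptic/parabolic regularity (part~3 gives $\Delta \tilde u_i$ bounded), yields $C^\infty_{\mathrm{loc}}$ subconvergence on $X\setminus Z$ to $u_\infty$ with $\sqrt{-1}\partial\bar\partial u_\infty=\Ric(\omega_\infty)=\omega_\infty-\sqrt{-1}\partial\bar\partial f$. Thus $\phi:=u_\infty+f$ satisfies $\sqrt{-1}\partial\bar\partial\phi=\omega_\infty>0$ on $X\setminus Z$, so $\phi$ is strictly plurisubharmonic there.

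\textbf{Contradiction.} Since $V$ is compact, the same argument must also handle $V\cap Z\neq\emptyset$. Near $Z$ the currents $[E_i]$ converge to $\mathcal Z$ and contribute $+2\pi(n-1)\mathcal Z\ge0$. I would therefore define $u_\infty$ globally as a quasi-psh limit on all of $X$, with $\sqrt{-1}\partial\bar\partial u_\infty=\Ric+2\pi(n-1)\mathcal Z$ as currents. Then $\phi$ is a psh function on $X$, bounded above locally, and smooth and strictly psh off $Z$. Restricted to the irreducible compact $V$, $\phi|_V$ is a psh function on a compact irreducible space not identically $-\infty$, since $V\not\subset Z$; hence $\phi|_V$ is constant. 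That contradicts $\sqrt{-1}\partial\bar\partial\phi|_{V}=\omega_\infty|_V>0$ on the nonempty open set $V\setminus Z$, with $\dim V\ge1$. The main obstacle is this global passage to the limit across $Z$. It requires $L^1_{\mathrm{loc}}$ compactness of the normalized $\tilde u_i$ near $Z$, which I would get from a uniform upper bound on compact sets (the Type I volume form comparison bounds $u$ from above) and standard compactness of quasi-psh functions with converging curvature currents. The gradient estimate is needed only to rule out $\tilde u_i\to-\infty$ off $Z$, which fixes the normalization.
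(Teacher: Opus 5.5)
\textbf{The gap: the upper bound near $Z$.} Your overall strategy matches the paper's: take the limit of the normalized logarithmic Ricci potentials, note that $\phi=u_\infty+f$ is strictly psh off $Z$, extend $\phi$ across $Z$, and apply the maximum principle on $V$. However, the step you yourself flag as the main obstacle is not closed by the argument you give. The Schwarz lemma (Lemma~\ref{Schwarz}) does give a uniform bound $u_t\le C$ for the \emph{unnormalized} function, since $\omega_t^n\ge C^{-n}\pi^*(\omega_Y^n)\ge c\,\pi^*\Omega_Y$. But a nontrivial limit on $X\setminus Z$ only exists after normalization. For $\tilde u_i=\widehat u_i-\widehat u_i(y_0)$ that bound becomes $\tilde u_i\le C-\widehat u_i(y_0)$. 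Nothing in your proposal bounds $\widehat u_i(y_0)=u_{t_i}(\Phi_i(y_0))$ from below, and there is no a priori reason it should be bounded. Indeed $\Phi_i(y_0)$ converges to $E$ in $M$, where $u_t$ has a $-\infty$ singularity. The gradient estimate of Proposition~\ref{heat}(4) controls only the oscillation of $\tilde u_i$ on compact subsets of $X\setminus Z$, not the level. So you do not have a local upper bound for $\tilde u_i$ near $Z$. Without it, neither Hartogs-type $L^1_{\mathrm{loc}}$ compactness of quasi-psh functions nor the psh extension theorem applies. The contradiction for any $V$ meeting $Z$ depends exactly on this.

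\textbf{The missing idea.} Get the upper bound from plurisubharmonicity itself, as the paper does. Take $\Omega\Subset X$ with $Z\Subset\Omega$ and $\partial\Omega\cap Z=\varnothing$. On $\Omega$, the function $\phi_i:=\tilde u_i+f$ satisfies $\sqrt{-1}\partial_{J_i}\bar\partial_{J_i}\phi_i=\beta_i+2\pi(n-1)[E_i]$, where $\beta_i=\Ric+\sqrt{-1}\partial_{J_i}\bar\partial_{J_i}f\to\omega_\infty$ smoothly. Hence $\phi_i$ is $J_i$-psh on $\overline\Omega$ for large $i$. Since $\partial\Omega$ is a compact subset of $X\setminus Z$, where $\tilde u_i$ converges smoothly, the maximum principle gives $\sup_\Omega\phi_i\le\sup_{\partial\Omega}\phi_i\le C$ uniformly in $i$.

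With this bound in hand, two routes finish the argument. Your compactness route works in the convergent holomorphic coordinates of \cite[Lemma~A.1]{CHM}, and would even identify $\sqrt{-1}\partial\bar\partial\phi=\omega_\infty+2\pi(n-1)\mathcal Z$. The paper's route passes the bound to $\phi$ on $\Omega\setminus Z$ and invokes \cite[Theorem~5.24]{Dema}. Either produces the psh extension, after which your final step on $V$ goes through.

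\textbf{A smaller point.} The $P^{*-}$-separation required by Proposition~\ref{heat}(4) does not follow from the diameter bound and Hausdorff convergence alone. Those give only spatial separation of $E_i$ from compact subsets of $X\setminus Z$, uniformly on compact time intervals, since the Cheeger--Gromov maps are time-independent and the metrics are uniformly equivalent there. You still need a statement such as \cite[Corollary~9.6]{bamler2020entropy}: under the rescaled Type~I bound, $P^{*-}$-neighborhoods of radius $r$ lie in metric parabolic balls of radius $Ar$.
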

\begin{proof}
\textbf{Claim:} Let \(K\Subset X\setminus Z\) and \(B>1\). Then there exist
\(r=r(K,B)\in(0,1)\) and \(i_0=i_0(K,B)\) such that $P^{\ast-}_{g_i}
 \bigl(\Psi_{i,s}(x),s;r\bigr)
 \cap\bigl(E\times[-B-1,-1]\bigr)=\varnothing$ for every \(i\ge i_0\), \(x\in K\), and \(s\in[-B,-1]\).

\begin{proof}
    For each \(s\in[-B-1,-1]\), set $E_{i,s}:=\Psi_{i,s}^{-1}(E)$.
Since \(K\cap Z=\varnothing\), compactness, the spacetime Hausdorff
convergence $E_{i,s}\longrightarrow Z$ locally uniformly for \(s\in[-B-1,-1]\), and the smooth Cheeger--Gromov convergence give $d_{g_i(s)}\bigl(\Psi_{i,s}(x),E\bigr)\ge d_0>0$ for any $x\in K$ and $s\in[-B-1,-1]$ for all sufficiently large \(i\). The Type~I curvature condition and
\cite[Corollary 9.6]{bamler2020entropy} yields a constant
\(A<\infty\) such that
\[
 P^{\ast-}_{g_i}
 \bigl(\Psi_{i,s}(x),s;r\bigr)
 \subset
 \bigcup_{\sigma\in[s-r^2,s]}
 B_{g_i(\sigma)}
 \bigl(\Psi_{i,\sigma}(x),Ar\bigr)
 \times\{\sigma\}
\]
whenever \(r\) is sufficiently small. Choosing
\(Ar<d_0/2\) proves the assertion.
\end{proof}
Fix \(x_\ast\in X\setminus Z\), and define $\widetilde u_i(y,s):= u_{T+\lambda_i^{-1} s}(y)+a_i$, where $a_i:=-u_{t_i}\bigl(\Psi_{i,-1}(x_\ast)\bigr)$. Thus \(\widetilde u_i\) is defined on the rescaled flow on \(M\).
For every \(s<0\), define the pulled-back quantities on \(X\) by $\bar g_i(s):=\Psi_{i,s}^*g_i(s)$ and $J_i(s):=\Psi_{i,s}^*J_M$. And we set $v_i(\cdot,s):=\Psi_{i,s}^*\widetilde u_i(\cdot,s)$, where $\Psi_{i,s}$ is the spacetime Cheeger-Gromov mapping. Let \(L\Subset X\setminus Z\) be compact and connected with
\(x_\ast\in L\), and let \(B>1\). The \(P^*\)-separation claim,
together with the gradient estimate for Proposition \ref{heat}, gives
\begin{equation*}
    \sup_{s\in[-B,-1]}
    \sup_{x\in L}
    \left(
        |v_i(x,s)|
        +
        |\nabla^{\bar g_i(s)}v_i(x,s)|_{\bar g_i(s)}
    \right)
    \leq C(L,B).
\end{equation*}
Here the \(C^0\)-estimate follows from the normalization, the gradient
estimate, the Type~I scalar-curvature bound, and the smooth control of
the spacetime convergence maps.

By Lemma \ref{loc}, the image of
\(L\times[-B,-1]\) under the convergence maps is eventually contained
in the fixed exceptional neighborhood \(V\), on which the background
form \(\pi^*\theta\) vanishes. Since the corresponding parabolic
neighborhoods are disjoint from \(E\), the functions
\(\widetilde u_i\), on the original image regions $\Psi_i\bigl(L\times[-B,-1]\bigr)\subset M\times[-B,-1]$, satisfy
\[
    \sqrt{-1}\partial\bar\partial\widetilde u_i
    =
    \operatorname{Ric}_{\omega_i},
    \qquad
    (\partial_s-\Delta_{g_i(s)})\widetilde u_i=0.
\]
Applying local parabolic regularity on these image regions and then
pulling the resulting estimates back by the spacetime convergence
maps, a diagonal argument gives $v_i(\cdot,-1)
    =
    \Psi_{i,-1}^*\widetilde u_i(\cdot,-1)
    \longrightarrow u$
in $C^\infty_{\mathrm{loc}}(X\setminus Z)$. Passing to the limit in the pulled-back complex Hessian equation gives $\sqrt{-1}\partial\bar \partial u
    =
    \operatorname{Ric}_{\omega}$ on $X\setminus Z$. Let \(f\) be the shrinker potential. Then $\phi:=u+f$ satisfies
\begin{equation}\label{ppp}
    \sqrt{-1}\partial\bar\partial\phi
    =
    \omega
    \qquad\text{on }X\setminus Z.
\end{equation}

We next extend \(\phi\) across \(Z\). Choose a connected relatively
compact domain \(\Omega\Subset X\) with smooth boundary such that $Z\Subset\Omega$ and $\partial\Omega\cap Z=\varnothing$. Since \(E_i\to Z\) in the Hausdorff topology, $E_i\Subset\Omega$ for all sufficiently large \(i\). The convergence maps are defined on a neighborhood of \(\overline\Omega\) for all large \(i\). Define $\phi_i:=v_i(\cdot,-1)+f$ on $\Omega$. By Proposition \ref{heat}
\[
 \sqrt{-1}\partial_{J_i}\bar\partial_{J_i}\phi_i
 =
 \beta_i+2\pi(n-1)[E_i],
\]
where $\beta_i
        :=
        \operatorname{Ric}_{\bar\omega_i(-1)}
        +
        \sqrt{-1}
        \partial_{J_i}\bar\partial_{J_i}f$. We note that $\phi_i$ has logarithmic singularities along $E_i$ and $\phi_i \in L^1_{loc}$. The smooth convergence implies $\beta_i
        \longrightarrow\omega$ smoothly on $\overline\Omega$, consequently, $\beta_i>0$ on \(\overline\Omega\) for all sufficiently large \(i\). Since
\([E_i]\) is a positive \(J_i\)-holomorphic current, \(\phi_i\) is
\(J_i\)-plurisubharmonic on a neighborhood of
\(\overline\Omega\).

Since \(\partial\Omega\Subset X\setminus Z\), the locally smooth
convergence $\phi_i\longrightarrow\phi$ on $X\setminus Z$ gives $\sup\nolimits_{\partial\Omega}\phi_i\leq C$ for sufficiently large \(i\). The maximum principle for
plurisubharmonic functions therefore yields
\[  
        \sup\nolimits_{\Omega}\phi_i
        \leq
        \sup\nolimits_{\partial\Omega}\phi_i
        \leq C.
\]
Passing to the limit on \(\Omega\setminus Z\), we obtain $\sup\nolimits_{\Omega\setminus Z}\phi\leq C$. Thus \(\phi\) is locally bounded above near \(Z\). The extension theorem of plurisubharmonic
functions \cite[Theorem~5.24]{Dema} shows that there exists unique plurisubharmonic extension of \(\phi\) to \(X\).

Let \(A\subset X\) be an irreducible positive-dimensional compact
analytic subvariety. Suppose that \(A\not\subset Z\). Then $A_{\mathrm{reg}}\setminus Z\neq\varnothing$, and the restriction \(\phi|_A\) is not identically equal to \(-\infty\). Since \(A\) is compact and irreducible, the maximum
principle for plurisubharmonic functions implies that \(\phi|_A\) is constant. On the other hand, for any regular point of $A\setminus Z$, we have $\sqrt{-1}\partial\bar\partial
    \bigl(\phi|_{A}\bigr)>0$, contradicting the constancy of \(\phi|_A\). Therefore $A\subset Z$.
\end{proof}

\section{Algebraic part}
\subsection{Polarized Fano fibration and birational contraction}
By Section 2.2, we know any Kähler Ricci shrinker admits a polarized Fano fibration structure
\[
        q:X\longrightarrow W=\Spec R_X.
\]
Here \(q\) is a $T$-equivariant surjective projective morphism, \(q_*\OO_X=\OO_W\), \(W\) is a normal polarized
affine cone with a unique torus fixed point \(o\), and \(-K_X\) is
\(q\)-ample.

\begin{proposition}\label{sz}
The morphism \(q\) is birational and
\[
        q^{-1}(o)=Z,
        \qquad
        q:X\setminus Z\xrightarrow{\;\simeq\;}W\setminus\{o\}.
\]
\end{proposition}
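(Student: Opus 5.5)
The plan is to combine three facts: the fibres of $q$ are compact, $W$ is affine, and $Z$ is the maximal compact analytic subset of $X$ by Proposition~\ref{maximalcompact}.

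First, every fibre $q^{-1}(w)$ is a compact analytic subvariety of $X$, because $q$ is projective. Suppose some fibre had positive dimension. Then every positive-dimensional irreducible component of it lies in $Z$. Next we use the torus action. Since $q$ is $T$-equivariant and $\xi$ lies in the Reeb cone, the flow of $-\xi$ (in $T_{\mathbb C}$) contracts every point of $W$ to the vertex $o$. In particular $o$ lies in the closure of every orbit, and $q^{-1}(o)$ is nonempty and compact.

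The core step is to show that $q^{-1}(o)=Z$.

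For the inclusion $Z\subset q^{-1}(o)$: $Z$ is compact and connected by Proposition~\ref{analy}, so $q(Z)$ is a compact connected analytic subset of the affine variety $W$. Such a set is a finite set of points, hence a single point $w_0$. Moreover $Z$ is $T$-invariant. Indeed, $T$ acts by biholomorphisms, so $t\cdot Z$ is again a compact analytic set of pure dimension $n-1$. By Proposition~\ref{maximalcompact} it is contained in $Z$; since it has the same dimension and the same number of components, $t\cdot Z=Z$. Hence $w_0$ is a $T$-fixed point, so $w_0=o$.

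For the reverse inclusion $q^{-1}(o)\subset Z$: $q^{-1}(o)$ is compact and connected, because $q_*\mathcal O_X=\mathcal O_W$ gives connected fibres by Zariski's main theorem. If it has positive dimension, Proposition~\ref{maximalcompact} puts it inside $Z$. It contains $Z$, which is nonempty and positive dimensional, so the fibre cannot be a single point. Therefore $q^{-1}(o)=Z$.

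Next we show that all other fibres are points. Take $w\neq o$. If $q^{-1}(w)$ had positive dimension, it would lie in $Z=q^{-1}(o)$, which is impossible. So $q^{-1}(w)$ is finite, and being connected it is a single point. Thus $q$ restricted to $X\setminus Z\to W\setminus\{o\}$ is a bijective proper morphism with connected fibres onto a normal variety. By Zariski's main theorem it is an isomorphism, and in particular $q$ is birational, since $\dim W=\dim X$. Surjectivity onto $W\setminus\{o\}$ follows from the surjectivity of $q$.

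The main obstacle I expect is justifying that $q(Z)$ is the fixed point $o$ rather than some other point. The argument above relies on the $T$-invariance of $Z$, which I obtain from the maximality in Proposition~\ref{maximalcompact} together with equality of dimensions.

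An alternative route is to note that if $q(Z)=w_0\neq o$, then the whole orbit $T_{\mathbb C}\cdot w_0$ consists of points with positive-dimensional fibres. Each of those fibres must lie in $Z$, yet they are disjoint from $q^{-1}(w_0)\supset Z$, which is a contradiction. This argument in fact needs only $w_0$ to be non-fixed, which is exactly the situation when $w_0\neq o$.

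A secondary technical point is making sure the holomorphic structure on $X$ from the Cheeger--Gromov limit is the same one that carries the algebraic structure of Sun--Zhang. This is the case, since \cite[Theorem 3.1]{SZ} produces $q$ as a holomorphic map on the given complex manifold $X$.
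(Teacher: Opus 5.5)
Your proposal is correct and follows essentially the same route as the paper. The shared steps are: maximality of $Z$ (Proposition~\ref{maximalcompact}); $q(Z)$ is a single $T$-fixed point of the affine cone and hence equals $o$; connectedness of fibres gives $q^{-1}(o)=Z$; and finiteness of $q$ over $W\setminus\{o\}$ gives the isomorphism. The only differences are that you obtain birationality and $\dim W=n$ at the end rather than proving them first as the paper does, and that you should state explicitly that connectedness of $q^{-1}(o)$ rules out isolated-point components, so that Proposition~\ref{maximalcompact} applies to every irreducible component of that fibre.
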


\begin{proof}
Suppose that \(\dim W<n\). Choose \(x\in X\setminus Z\), and let
\(F_x\) be an irreducible component of the fiber \(q^{-1}(q(x))\)
containing \(x\).  Since \(q\) is projective, \(F_x\) is a positive-dimensional
compact analytic subvariety of \(X\). It meets \(X\setminus Z\), contrary
to Proposition~\ref{maximalcompact}. Thus $\dim W=n$.

Since $X$ is smooth and $W$ is a normal variety, and $q$ is projective and satisfies $q_{*}\mathcal{O}_X=\mathcal{O}_W$, which forces all fibers of $q$ to be connected \cite[Corollary 11.3]{hartshorne2013algebraic}. Since $\dim X=\dim W=n$, the general fiber of $q$ is a single point. By the standard relation between the degree of a morphism and the cardinality of its general fiber, the field extension degree $[\mathbb{C}(X):\mathbb{C}(W)]$ equals $1$, so $\mathbb{C}(X)\cong\mathbb{C}(W)$ and $q$ is birational.

By Proposition~\ref{maximalcompact}, \(Z\) is the maximal compact analytic subset of \(X\), therefore \(Z\) is invariant under biholomorphism of \(X\), in particular under the $T$-action. Moreover, \(q(Z)\) is a compact
analytic subset of the affine variety \(W\). Hence \(q(Z)\) is
finite. By Lemma~\ref{analy}, \(Z\) is connected, so \(q(Z)\) is
connected and therefore consists of a single point: $q(Z)=\{o'\}$.The morphism \(q\) is $T$-equivariant and \(Z\) is torus invariant, so \(o'\) is fixed by the torus. By the uniqueness of $T$-fixed point in \(W\), $o'=o$. Thus $Z\subset q^{-1}(o)$.

Due to every fiber of \(q\) is connected. Every
positive-dimensional irreducible component of \(q^{-1}(o)\) is
contained in \(Z\). If there exists a point $p\in q^{-1}(o)\setminus Z$, then the irreducible component of \(q^{-1}(o)\) containing \(p\)
would be zero-dimensional. Such a component is an isolated
connected component of fiber. Since
\(Z\subset q^{-1}(o)\) is nonempty, this would contradict the
connectedness of \(q^{-1}(o)\). Therefore $q^{-1}(o)=Z$.

For every \(w\in W\setminus\{o\}\), the fiber \(q^{-1}(w)\) contains
no positive-dimensional irreducible component: otherwise that
component would be contained in \(Z\), whereas \(q(Z)=\{o\}\).
Consequently, the restriction $q:X\setminus Z\longrightarrow W\setminus\{o\}$ is projective and quasi-finite, hence finite. It is also birational,
and \(W\setminus\{o\}\) is normal. A finite birational morphism onto a
normal variety is an isomorphism. Hence $q:X\setminus Z\xrightarrow{\;\simeq\;}W\setminus\{o\}$.
\end{proof}
Combined with the relative cone theorem \cite[Theorem~3.25]{kollar}, we can characterise the birational morphism $q$ more precisely. Since \(X\) is smooth and \(-K_X\) is \(q\)-ample, we select \(H=-K_X\), then 
\[
    \overline{\operatorname{NE}}(X/W)
    =
    \sum_{j=1}^{N}\mathbb R_{\geq0}[C_j],
\]
where the \(C_j\) are rational curves contracted by \(q\). And for every extremal ray $R\subset\overline{\operatorname{NE}}(X/W)$, there exist a normal variety \(X_R\) and projective morphisms
\[
        \varphi_R:X\longrightarrow X_R,
        \qquad
        \rho_R:X_R\longrightarrow W,
\]
such that $q=\rho_R\circ\varphi_R$. And for every irreducible curve $C\subset X$, $\varphi_R(C)=\mathrm{pt}$ iff $[C]\in R$.
\begin{lemma}
    The above \(\varphi_R\) is birational and $\operatorname{Exc}(\varphi_R)\subset Z$. In fact, \(\varphi_R\) is an isomorphism away from \(Z\).
\end{lemma}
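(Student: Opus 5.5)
The plan is to use the factorization $q=\rho_R\circ\varphi_R$ together with Proposition~\ref{sz}. First, $\varphi_R$ is birational. Since $q$ is birational, the generic fiber of $q$ is a point. Hence $\varphi_R$ is generically injective, so $\dim X_R=n$. Moreover $(\varphi_R)_*\OO_X=\OO_{X_R}$, because $\varphi_R$ is a contraction produced by the cone theorem and so has connected fibers. The same degree argument used in Proposition~\ref{sz} (connected fibers and generic finiteness imply degree one) then shows that $\varphi_R$ is birational. Equivalently, $\mathbb C(X)\simeq\mathbb C(W)$ and $\mathbb C(W)\subset\mathbb C(X_R)\subset\mathbb C(X)$ force $\mathbb C(X_R)=\mathbb C(X)$.

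Next, I would show $\operatorname{Exc}(\varphi_R)\subset Z$. Let $x\in\operatorname{Exc}(\varphi_R)$, meaning that $\varphi_R$ fails to be a local isomorphism at $x$. By Zariski's main theorem, since $X_R$ is normal and $\varphi_R$ is birational and projective, this is equivalent to the fiber $\varphi_R^{-1}(\varphi_R(x))$ being positive-dimensional. That fiber is then a positive-dimensional compact subvariety of $X$, since $\varphi_R$ is projective and the fiber is a closed subvariety of a fiber of $q$. Proposition~\ref{maximalcompact} therefore puts it inside $Z$. An alternative route: any curve $C$ in the fiber satisfies $q(C)=\rho_R(\varphi_R(C))=\mathrm{pt}$, so $C\subset q^{-1}(w)$ for some $w$. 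By Proposition~\ref{sz}, the fibers over $w\neq o$ are finite, so $w=o$ and $C\subset q^{-1}(o)=Z$.

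Finally, I would prove that $\varphi_R$ is an isomorphism away from $Z$. The restriction $\varphi_R\colon X\setminus Z\to X_R\setminus\varphi_R(Z)$ has finite fibers, by the previous step. I would first check that $\varphi_R^{-1}(\varphi_R(Z))=Z$. Indeed, $\rho_R(\varphi_R(Z))=q(Z)=\{o\}$, so $\varphi_R^{-1}(\varphi_R(Z))\subset q^{-1}(o)=Z$. The restriction is therefore proper, being the base change of a projective morphism to an open set. Being also quasi-finite, it is finite. It is birational, and its target $X_R\setminus\varphi_R(Z)$ is normal, so it is an isomorphism by the same argument as at the end of Proposition~\ref{sz}.

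The main obstacle is the identification of the exceptional locus with the union of positive-dimensional fibers. This needs Zariski's main theorem in the normal target setting, and it needs the connectedness of the fibers of $\varphi_R$, which comes from $(\varphi_R)_*\OO_X=\OO_{X_R}$ in the cone/contraction theorem. I would cite the contraction theorem \cite[Theorem~3.25]{kollar} for this normalization property.
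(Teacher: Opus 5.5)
Your proof is correct, but it takes a heavier route than the paper's. The paper's argument is purely formal. Set \(X^\circ=X\setminus Z\), \(W^\circ=W\setminus\{o\}\) and \(X_R^\circ=\rho_R^{-1}(W^\circ)\). The factorization \(q=\rho_R\circ\varphi_R\) and surjectivity of \(\varphi_R\) give \(\varphi_R(X^\circ)=X_R^\circ\). Then \((q|_{X^\circ})^{-1}\circ\rho_R|_{X_R^\circ}\) is an explicit inverse of \(\varphi_R|_{X^\circ}\). So birationality, \(\operatorname{Exc}(\varphi_R)\subset Z\) and the isomorphism away from \(Z\) all follow at once from Proposition~\ref{sz}. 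That route needs no Zariski main theorem, no normality of \(X_R\), and no connectedness of the fibers of \(\varphi_R\).

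You instead prove the three claims separately. Birationality comes from \(\mathbb C(W)\subset\mathbb C(X_R)\subset\mathbb C(X)\) with \(q^*\colon\mathbb C(W)\xrightarrow{\simeq}\mathbb C(X)\); this alone suffices, and the degree argument is unnecessary. Next you show that positive-dimensional fibers of \(\varphi_R\) lie in \(q^{-1}(o)=Z\). Finally you use ``proper, quasi-finite, birational onto a normal target, hence an isomorphism''. Your open set \(X_R\setminus\varphi_R(Z)\) coincides with the paper's \(X_R^\circ\), since \(\varphi_R\) is surjective and \(q^{-1}(o)=Z\).

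Your third step already implies \(\operatorname{Exc}(\varphi_R)\subset Z\). It uses from the second step only that positive-dimensional fibers lie in \(Z\). So the appeal to Zariski's main theorem and fiber connectedness there, and in your ``main obstacle'' paragraph, is not actually needed. The only extra input your route requires is normality of \(X_R\), which the contraction theorem supplies. What your second step does add is the identification of \(\operatorname{Exc}(\varphi_R)\) with the union of positive-dimensional fibers. That is how the exceptional locus is implicitly used in the Ionescu--Wi\'sniewski step of Proposition~\ref{prop:length}, but the lemma itself does not need it.
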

\begin{proof}
    We set $W^\circ=W\setminus\{o\}$ and $X^\circ=X\setminus Z$, by Proposition \ref{sz}, $q|_{X^\circ}$ is an isomorphism between $X^\circ$ and $W^\circ$. Let $X_R^\circ:=\rho_R^{-1}(W^\circ)$, If \(y\in X_R^\circ\), choose \(x\in X\) with \(\varphi_R(x)=y\). Then $q(x)=\rho_R(y)\in W^\circ$, so \(x\in X^\circ\). It follows that $\varphi_R(X^\circ)=X_R^\circ$. Moreover, the morphism $(q|_{X^\circ})^{-1}\circ
        \rho_R|_{X_R^\circ}$ is inverse to \(\varphi_R|_{X^\circ}\). Hence $\varphi_R|_{X^\circ}:
        X^\circ\xrightarrow{\;\simeq\;}X_R^\circ$. Consequently, \(\varphi_R\) is birational and $\operatorname{Exc}(\varphi_R)\subset Z$.
\end{proof}

\subsection{Relative Mori cone rigidity}

We first estimate the lower bound of the length $\ell(R)$ of the relative extremal ray $R$, where 
\begin{align*}
   l(R):=\min\{-K_X\cdot C \mid C\subset X \text{ is a rational curve, } [C]\in R\}.
\end{align*}
\begin{lemma}
\label{lower}
For every relative extremal ray
\(R\subset \overline{\operatorname{NE}}(X/W)\), one has $l(R)\geq n-1$.
\end{lemma}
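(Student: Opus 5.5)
Fix a relative extremal ray $R\subset\overline{\operatorname{NE}}(X/W)$ and a rational curve $C\subset X$ with $[C]\in R$ realizing $l(R)=-K_X\cdot C$. By the preceding lemma, $\varphi_R$ is birational with $\operatorname{Exc}(\varphi_R)\subset Z$. Hence $C\subset Z$, and $C$ lies in some irreducible component $Z_\alpha$ of the limit cycle from Proposition~\ref{cycle}. The plan is to compute $-K_X\cdot C$ by a local Chern-class calculation near $Z$. The key point is that $X$ near $Z$ is the smooth Cheeger--Gromov limit of neighborhoods of $E$ in $M$. On those neighborhoods the canonical class is controlled by $K_M=\pi^*K_Y+(n-1)E$, and $\pi^*K_Y$ is trivial near $E$.

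\textbf{Step 1.} Use the logarithmic Ricci function to obtain a divisor representing $K_X$ near $Z$. By Proposition~\ref{heat}(2), on the image neighborhood $V$ where $\theta=0$ we have $\sqrt{-1}\partial\bar\partial u_t=\operatorname{Ric}_{\omega_t}+2\pi(n-1)[E]$. Passing this through the maps $\Phi_i$ and Proposition~\ref{cycle}, the limit $u$ of the functions $v_i$ (extended across $Z$, as in the proof of Proposition~\ref{maximalcompact}) satisfies
\[
\sqrt{-1}\partial\bar\partial u=\operatorname{Ric}_{\omega_\infty}+2\pi(n-1)\mathcal Z
\]
on a neighborhood $\Omega$ of $Z$. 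Equivalently, $e^{u}\omega_\infty^n$ is a smooth volume form vanishing to order $(n-1)m_\alpha$ along $Z_\alpha$. This says that $K_X|_\Omega\simeq\OO_\Omega\bigl((n-1)\sum_\alpha m_\alpha Z_\alpha\bigr)$, i.e.\ $-K_X\equiv-(n-1)\mathcal Z$ on $\Omega$. To justify the line-bundle identification I would use Proposition~\ref{heat}(1): $u_t=(n-1)\log|s|^2_{h_t}$. The functions $(n-1)^{-1}u_t$ are therefore log-norms of holomorphic sections, and the sections $s_i$ of $\OO(E_i)$ converge, after normalization, to a holomorphic section whose divisor is $\mathcal Z$.

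\textbf{Step 2.} Intersect with $C$. We get $-K_X\cdot C=-(n-1)\,\mathcal Z\cdot C$, so the claim reduces to $\mathcal Z\cdot C\le -1$, i.e.\ $\mathcal Z\cdot C<0$, since the intersection number is an integer. For negativity I would again pass through the approximation. Each $E_i$ satisfies $\OO(E_i)|_{E_i}\simeq\OO_{\PP^{n-1}}(-1)$, so $[E_i]$ is negative on every curve inside $E_i$. In the limit, $\mathcal Z$ has a Hermitian metric on $\OO(\mathcal Z)|_\Omega$ whose curvature is $-\tfrac{1}{n-1}\bigl(\operatorname{Ric}_{\omega_\infty}-\dots\bigr)$, which near $Z$ relates to $-\omega_\infty+\sqrt{-1}\partial\bar\partial f$. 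A more robust route is to use $q$-ampleness of $-K_X$ directly: $-K_X\cdot C>0$ because $C$ is $q$-contracted, and $-K_X\cdot C=-(n-1)\mathcal Z\cdot C$ with $\mathcal Z\cdot C\in\mathbb Z$. Then $\mathcal Z\cdot C<0$ forces $\mathcal Z\cdot C\le-1$, and hence $-K_X\cdot C\ge n-1$. This second route needs only the integrality of Step 1 and not a separate negativity argument.

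\textbf{Main obstacle.} The main difficulty is Step 1: rigorously identifying $K_X|_\Omega$ with $\OO((n-1)\mathcal Z)$ as holomorphic line bundles, rather than only as currents. This identification is what makes $-K_X\cdot C$ a multiple of $n-1$. My first approach is to construct the limiting section directly. Pull back $s$ by $\Phi_i$, normalize it at $x_\ast$, and use the uniform $L^1_{loc}$ and plurisubharmonic bounds from the proof of Proposition~\ref{maximalcompact} to get local uniform bounds on $|s_i|$. Then apply Montel's theorem in the convergent holomorphic coordinates of \cite[Lemma~A.1]{CHM}, and check via Lelong numbers that the zero divisor of the limit is exactly $\mathcal Z$. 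If this fails, I would instead argue that the current $2\pi(n-1)\mathcal Z$ represents $c_1(K_X)$ in $H^2(\Omega,\mathbb R)$, and use that $\mathcal Z$ is an integral divisor to conclude numerically.
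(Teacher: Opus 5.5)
Your argument is correct, but you get the key divisibility \(-K_X\cdot C\in(n-1)\mathbb Z\) analytically, whereas the paper gets it topologically.

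\textbf{The paper's route.} Lemma~\ref{loc} pushes \(C\) by \(\Phi_i\) into \(\widetilde\Omega=\pi^{-1}(\Omega)\), which retracts onto \(E\). Hence \((\Phi_i)_*[C]=m[\ell]\) in \(H_2(\widetilde\Omega;\mathbb Z)\simeq\mathbb Z[\ell]\). Since \(J_i\to J_\infty\), one has \(c_1(TU,J_\infty)=c_1(TU,J_i)=\Phi_i^*c_1(TM)\), and \(-K_M\cdot\ell=n-1\) then gives \(-K_X\cdot C=(n-1)m\).

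\textbf{Your route.} You pass the Poincar\'e--Lelong identity for the logarithmic Ricci potential to the limit. This needs something the paper never proves. First, \(L^1_{\mathrm{loc}}\) subconvergence of the \(J_i\)-plurisubharmonic functions \(\phi_i\) across \(Z\); this follows from Hartogs compactness in the convergent holomorphic coordinates of \cite[Lemma~A.1]{CHM}, using the uniform upper bound on \(\Omega\) and the smooth convergence on \(X\setminus Z\). Second, weak convergence of \(\sqrt{-1}\partial_{J_i}\bar\partial_{J_i}\phi_i\). Together with \([E_i]\rightharpoonup\mathcal Z\), these give \(\sqrt{-1}\partial\bar\partial u=\Ric_{\omega_\infty}+2\pi(n-1)\mathcal Z\) on \(\Omega\).

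This current identity alone only shows that \(K_X\otimes\OO(-(n-1)\mathcal Z)\) carries a flat Hermitian metric on \(\Omega\). It does not show that this bundle is holomorphically trivial, so your primary formulation of Step~1 claims more than it establishes. Your fallback is exactly what is needed: \(c_1(K_X)=(n-1)\sum_\alpha m_\alpha c_1(\OO(Z_\alpha))\) in \(H^2(\Omega;\mathbb R)\). Each \(Z_\alpha\) is Cartier on the smooth \(X\), so \(\mathcal Z\cdot C\in\mathbb Z\). The final positivity step via \(q\)-ampleness is the same as the paper's.

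\textbf{Comparison.} The paper's route is shorter and uses only localization and the topology of the blow-up. Yours costs an extra convergence argument across \(Z\). In return it gives the sharper formula \(-K_X\cdot C=-(n-1)\,\mathcal Z\cdot C\), which ties the canonical class near \(Z\) directly to the limiting cycle; the paper's integer \(m\) is then \(-\mathcal Z\cdot C\).
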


\begin{proof}
Let \(C\subset X\) be a rational curve whose class belongs to \(R\).
Since \(C\) is a positive-dimensional compact analytic subvariety,
Proposition~\ref{maximalcompact} gives \(C\subset Z\). Choose a relatively compact open neighborhood \(U\Subset X\) of
\(\lvert C\rvert\), and let $\Phi_i\colon U\longrightarrow M$ be the Cheeger--Gromov embeddings. Set $J_i:=\Phi_i^*J_M$ and $J_i\longrightarrow J_\infty$ in $C_{loc}^\infty(X)$.
Fix a sufficiently small holomorphic coordinate ball
\(\Omega\Subset Y\) centered at \(p\), and write $\widetilde{\Omega}:=\pi^{-1}(\Omega)\subset M$. After enlarging \(U\) slightly and applying Lemma \ref{loc}
to its compact closure, we obtain $\Phi_i(\lvert C\rvert)\subset\widetilde{\Omega}$ for all sufficiently large \(i\).

 We consider the push-forward of its integral
fundamental class $\alpha_i:=(\Phi_i)_*[C]$ in $H_2(\widetilde{\Omega};\mathbb Z)$. The coordinate ball \(\widetilde{\Omega}\) deformation
retracts onto the exceptional divisor.
Therefore $H_2(\widetilde{\Omega};\mathbb Z)\simeq \mathbb Z[\ell]$, where \(\ell\subset E\) is a projective line. Hence $\alpha_i=m_i[\ell]$
for some \(m_i\in\mathbb Z\). Since \(J_i\to J_\infty\) uniformly on \(U\), the almost complex
structures \(J_i\) and \(J_\infty\) are homotopic on \(U\) for 
sufficiently large \(i\). Consequently, $c_1(TU,J_i)=c_1(TU,J_\infty)$ in $H^2(U;\mathbb Z)$.

Moreover, by \(J_i=\Phi_i^*J_M\), we have the isomorphism of complex vector bundles $(TU,J_i)\simeq \Phi_i^*(TM,J_M)\big|_{\Phi_i(U)}$. The naturality of the first Chern class therefore gives
\begin{align*}
    -K_X\cdot C
    =\big\langle c_1(TX,J_\infty),[C]\big\rangle
    =\big\langle c_1(TU,J_i),[C]\big\rangle
    =\big\langle c_1(TM,J_M),(\Phi_i)_*[C]\big\rangle
    =m_i\big\langle c_1(TM,J_M),[\ell]\big\rangle .
\end{align*}

For the blow-up of a smooth point, we have $K_M=\pi^*K_Y+(n-1)E$. Since \(\pi(\ell)=p\) and \(E\cdot\ell=-1\), it follows that
$\big\langle c_1(TM),[\ell]\big\rangle
       =-K_M\cdot\ell
       =n-1$. Thus $-K_X\cdot C=(n-1)m_i$. In particular, \(m_i\) is independent of \(i\) for all sufficiently
large \(i\); denote this integer by \(m(C)\).

Finally, since \(-K_X\) is \(q\)-ample and \(C\) is contracted by
\(q\), we have $-K_X\cdot C>0$. As \(n\geq2\), the preceding identity implies \(m(C)>0\), hence
\(m(C)\geq1\) and $-K_X\cdot C\geq n-1$, therefore $l(R)\geq n-1$.
\end{proof}

\begin{proposition}
\label{prop:length}
For every relative extremal ray $R\subset\overline{\operatorname{NE}}(X/W)$, one has $l(R)=n-1$. Moreover, every irreducible component \(E\) of \(\operatorname{Exc}(\varphi_R)\) has dimension \(n-1\) and is
contracted by \(\varphi_R\) to a point.
\end{proposition}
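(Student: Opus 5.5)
The plan is to combine the lower bound of Lemma~\ref{lower} with the relative Ionescu--Wi\'sniewski inequality. For an extremal contraction $\varphi_R$ of a smooth variety and any irreducible component $F$ of a nontrivial fiber of $\varphi_R|_{E}$, where $E$ is an irreducible component of $\operatorname{Exc}(\varphi_R)$, this inequality reads
\[
\dim E+\dim F\geq \dim X+l(R)-1 = n+l(R)-1 .
\]
This form holds for birational extremal contractions of smooth varieties; the relative version over $W$ follows from the local bend-and-break argument (Ionescu, Wi\'sniewski, and Koll\'ar's relative cone theorem framework).

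The first step is to bound the dimensions. Since $\varphi_R$ is birational by the previous lemma, $E\subsetneq X$, so $\dim E\leq n-1$. Moreover $E\subset\operatorname{Exc}(\varphi_R)\subset Z$, and $Z$ has pure dimension $n-1$ by Proposition~\ref{analy}. Trivially $\dim F\leq \dim E\leq n-1$. Inserting these bounds gives
\[
2(n-1)\geq n+l(R)-1, \qquad\text{so}\qquad l(R)\leq n-1 .
\]
Together with Lemma~\ref{lower}, this yields $l(R)=n-1$.

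The second step is to read off the equality case. With $l(R)=n-1$, the inequality becomes $\dim E+\dim F\geq 2n-2$. Since both dimensions are at most $n-1$, this forces $\dim E=\dim F=n-1$. Hence $F=E$: the whole component $E$ is a single fiber component, so $\varphi_R(E)$ is a point. It is worth remarking that $\operatorname{Exc}(\varphi_R)$ is nonempty, since $R$ is spanned by a contracted rational curve, which lies in $\operatorname{Exc}(\varphi_R)$.

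The main obstacle is justifying Ionescu--Wi\'sniewski in the relative, quasi-projective setting. The argument deforms a minimal rational curve $C$ with $[C]\in R$ and $-K_X\cdot C=l(R)$ through points of $F$. Every deformation stays contracted by $\varphi_R$, so it lies inside a compact fiber, and the relevant Hom-scheme is of finite type. The usual estimate then gives that the locus of such curves through a general point $x\in F$ has dimension at least $-K_X\cdot C-1=l(R)-1$. Adding $\dim E$ via the family of such loci gives the inequality. I would check that Koll\'ar--Mori's bend-and-break applies verbatim over the affine base $W$, since all curves involved sit in $q^{-1}(o)=Z$, which is compact and projective over a point.
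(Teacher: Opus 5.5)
Your proposal is correct and follows the paper's proof essentially verbatim. The relative Ionescu--Wi\'sniewski inequality $\dim E+\dim F\geq n+l(R)-1$, combined with $\dim F\leq\dim E\leq n-1$ (from the birationality of $\varphi_R$) and Lemma~\ref{lower}, forces every inequality to be an equality, which gives $l(R)=n-1$ and $F=E$. You also note that $\operatorname{Exc}(\varphi_R)\neq\varnothing$, which is needed before the inequality can bound $l(R)$; the paper leaves this point implicit.
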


\begin{proof}
Let \(E\) be an irreducible component of
\(\operatorname{Exc}(\varphi_R)\), and let \(F\subset E\) be an
irreducible component of a nontrivial fiber of \(\varphi_R\).
The relative Ionescu--Wi\'sniewski fiber-locus inequality
\cite[\S1.B, (1.1)]{HN} gives
\[
        \dim E+\dim F
        \geq n+l(R)-1.
\]
Since \(\varphi_R\) is birational, we know $\dim E\leq n-1$ and $\dim F\leq n-1$. On the other hand, Lemma~\ref{lower} gives $l(R)\geq n-1$. Consequently,
\[
        2n-2
        \geq \dim E+\dim F
        \geq n+l(R)-1
        \geq 2n-2.
\]
All inequalities are equalities. Hence $l(R)=n-1$ and $\dim E=\dim F=n-1$. Since \(F\subset E\) and both are irreducible of the same dimension,
we have \(F=E\). It follows that $\dim\varphi_R(E)=0$, so \(E\) is contracted to a point.
\end{proof}

\begin{remark}
\label{rem:relative-AO}
Then we use the standard quasi-projective form of the
Andreatta--Occhetta characterization. Namely, a projective elementary
divisorial contraction from a smooth quasi-projective variety whose
nontrivial fibers have dimension \(l(R)\) is locally the blow-up of a
smooth center of codimension \(l(R)+1\) \cite[\S1.B, the paragraph following
Theorem~1.4]{HN}, with reference to \cite[Theorem~5.1]{AO}.
Although the latter is stated for projective varieties, its proof
localizes over the base; in the present setting all relevant curves
are contained in the projective exceptional fiber.
\end{remark}
The preceding length computation and
Remark~\ref{rem:relative-AO} immediately yield the following.
\begin{proposition}
\label{prop:rigidity}
For every relative extremal ray $R\subset\overline{\operatorname{NE}}(X/W)$,
the associated contraction $\varphi_R:X\longrightarrow X_R$ has a unique compact exceptional prime divisor \(D_R\subset X\).
Moreover, in a neighborhood of \(D_R\), the morphism \(\varphi_R\)
is the ordinary blow-down of \(D_R\) to a smooth point. In particular,
\[
        D_R\simeq\PP^{n-1},
        \qquad
        N_{D_R/X}\simeq\OO_{\PP^{n-1}}(-1).
\]
\end{proposition}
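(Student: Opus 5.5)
The plan is to combine Proposition~\ref{prop:length} with the Andreatta--Occhetta characterization recalled in Remark~\ref{rem:relative-AO}. Fix a relative extremal ray $R$ and its contraction $\varphi_R:X\to X_R$. By the preceding lemma, $\varphi_R$ is birational with $\operatorname{Exc}(\varphi_R)\subset Z$. Proposition~\ref{prop:length} shows that every irreducible component of $\operatorname{Exc}(\varphi_R)$ has dimension $n-1$ and is contracted to a point. Hence $\varphi_R$ is divisorial, and each exceptional prime divisor is compact, being a fiber component of a projective morphism. So $\varphi_R$ is an elementary divisorial contraction from the smooth quasi-projective $X$. Its nontrivial fibers have dimension $n-1=l(R)$, and the length is attained by a rational curve.

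\textbf{Step 1 (local structure).} Apply Remark~\ref{rem:relative-AO}. Over a neighborhood of each image point $y=\varphi_R(D)$, $\varphi_R$ is the blow-up of $X_R$ along a smooth center of codimension $l(R)+1=n$, i.e.\ the blow-up of a smooth point. The exceptional divisor of a point blow-up in an $n$-fold is $\PP(T_yX_R)\simeq\PP^{n-1}$, with normal bundle the tautological bundle $\OO_{\PP^{n-1}}(-1)$. A direct cross-check is also available: $-K_X\cdot\ell=n-1$ on a line and adjunction give $K_D=\OO(-n)$, so $N_{D/X}=\OO(-1)$.

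\textbf{Step 2 (uniqueness).} This is the step I expect to be the main obstacle: showing there is only one exceptional divisor, i.e.\ that the exceptional fiber of $\varphi_R$ is connected. Each exceptional component $D$ maps to a point $y_D\in X_R$ lying over $o$, since $D\subset Z=q^{-1}(o)$. I would use the torus action. $Z$ is $T$-invariant, and since $T$ is connected it preserves each irreducible component, so each $D$ is $T$-invariant. The contraction $\varphi_R$ is $T$-equivariant, because $T$ acts trivially on $N_1(X/W)$, the torus being connected; hence $y_D$ is $T$-fixed. I would then argue that $\rho_R^{-1}(o)$ has a unique $T$-fixed point. The first attempt is to use the $\xi$-flow: the Reeb direction contracts $W$ to $o$, and $\varphi_R$ is an isomorphism off finitely many points. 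An alternative is the volume count: each $D\simeq\PP^{n-1}$ carries its own mass, and together they would have to fit into $\mathbf M(\mathcal Z)=V_0$. If the fixed-point argument proves delicate, I would simply record existence of one such $D_R$ per ray, plus the local blow-down description near it. The uniqueness claimed here would then follow once the volume comparison $\operatorname{Vol}(D_R)=V_0$, which appears later in the outline, forces $\mathcal Z=[D_R]$.

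\textbf{Step 3 (conclusion).} Near $D_R$, $\varphi_R$ is the ordinary blow-down to a smooth point, with $D_R\simeq\PP^{n-1}$ and $N_{D_R/X}\simeq\OO_{\PP^{n-1}}(-1)$, as stated.
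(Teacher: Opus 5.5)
Your Step~1 is the paper's whole argument: the paper says that Proposition~\ref{prop:length} and Remark~\ref{rem:relative-AO} ``immediately yield'' the statement, and it gives no separate argument for the uniqueness of $D_R$. You are right to single out uniqueness, but your torus route does not work as stated. The $\xi$-flow only tells you that $o$ is the unique $T$-fixed point of $W$; it says nothing about fixed points in the compact fibre $\rho_R^{-1}(o)=\varphi_R(Z)$. Suppose there were two exceptional divisors $D_1,D_2$ with images $y_1\neq y_2$. Then $y_1,y_2$ would simply be two $T$-fixed points of that fibre. This is no contradiction: since $Z$ is connected, they would be joined by images of components of $Z$ not contracted by $\varphi_R$, and a compact $T$-variety (already $\PP^1$ with a $\mathbb C^*$-action) can have several fixed points. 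So ``$\rho_R^{-1}(o)$ has a unique fixed point'' is just a restatement of what you want to prove.

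Your volume count, by contrast, is a complete argument once you write it out. Each component $D$ of $\operatorname{Exc}(\varphi_R)$ is irreducible of dimension $n-1$ and lies in $Z=\operatorname{Supp}\mathcal Z$, which is pure $(n-1)$-dimensional. Hence $D$ is one of the $Z_\alpha$ and occurs in $\mathcal Z$ with multiplicity at least $1$. Step~1 and adjunction give $c_1(X)|_D=(n-1)h$, and $[\omega_\infty]=c_1(X)$, so $\Vol_{\omega_\infty}(D)=V_0$. Two distinct components would therefore force $\mathbf M(\mathcal Z)\geq 2V_0$, contradicting Proposition~\ref{cycle}. This is the computation of Proposition~\ref{prop:saturation} moved earlier. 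Since that proposition uses only the existence of one such $D_R$, your fallback of deferring uniqueness is also non-circular.

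A purely algebraic argument is shorter still and needs no analytic input. Take a line $\ell\subset D\simeq\PP^{n-1}$; then $D\cdot\ell=-1$, and $[\ell]\in R$ because $\ell$ is contracted. Now let $C$ be any curve with class in $R$. Since $-K_X\cdot C>0$, $[C]$ is a positive multiple of $[\ell]$. As $D$ is Cartier on the smooth $X$, this gives $D\cdot C<0$, so $C\subset D$. Every other exceptional component is contracted to a point, so it is swept out by curves in $R$ and hence lies in $D$. Thus $\operatorname{Exc}(\varphi_R)=D$; this is the standard irreducibility of the exceptional locus of a divisorial extremal contraction.
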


\section{The Proof of Main Theorem}
In this section we prove the main theorem \ref{main} by combining the analytic and algebraic results established earlier.
\begin{proposition}
\label{prop:saturation}
The limiting analytic cycle is reduced and irreducible. More precisely,
\[
        \mathcal Z=[D_R], \qquad  Z=D_R\simeq\PP^{n-1}.
\]
\end{proposition}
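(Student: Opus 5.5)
We need to show $\mathcal Z=[D_R]$ with $Z=D_R\simeq\PP^{n-1}$. The plan is to compare the mass of $\mathcal Z$ with the volume of $D_R$, using that both are computed by the same cohomological pairing against the rescaled Kähler class.

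\textbf{Step 1: $D_R$ is a component of $Z$.} Fix a relative extremal ray $R$, which exists since $Z\neq\varnothing$ contains contracted curves. Then $D_R$ is compact, irreducible and of dimension $n-1$. By Proposition~\ref{maximalcompact}, $D_R\subset Z$. By Proposition~\ref{cycle}, $Z=\bigcup_\alpha Z_\alpha$ with each $Z_\alpha$ irreducible of dimension $n-1$. Hence $D_R=Z_{\alpha_0}$ for some $\alpha_0$, and
\[
\mathcal Z=m_{\alpha_0}[D_R]+\sum_{\alpha\neq\alpha_0}m_\alpha[Z_\alpha].
\]

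\textbf{Step 2: $\operatorname{Vol}_{g_\infty}(D_R)=V_0$.} This is the key computation. As in Lemma~\ref{lower}, push $D_R$ forward by the Cheeger--Gromov maps. By Lemma~\ref{loc}, $\Phi_i(D_R)\subset\widetilde\Omega$, and $H_{2n-2}(\widetilde\Omega;\mathbb Z)\simeq\mathbb Z[E]$ because $\widetilde\Omega$ retracts onto $E$. So $(\Phi_i)_*[D_R]=k_i[E]$. Since $\widehat\omega_i$ is closed,
\[
\frac{1}{(n-1)!}\int_{D_R}(\Phi_i^*\widehat\omega_i)^{n-1}=k_iV_0 .
\]
Letting $i\to\infty$ and using smooth convergence gives $\operatorname{Vol}_{g_\infty}(D_R)=k V_0$, with $k\geq1$ a constant integer because the volume is positive.

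To pin down $k=1$, I would use the normal bundle. Since $D_R$ is a smooth divisor, $\Phi_i(D_R)$ is an embedded hypersurface with $\Phi_i(D_R)\cdot\ell=k\,(E\cdot\ell)=-k$. On the other hand, $N_{D_R/X}\simeq\OO(-1)$, so the self-intersection of $D_R$ on a line $\ell'\subset D_R$ is $-1$. The intersection pairing is preserved by $\Phi_i$, because $(\Phi_i)_*[\ell']=m[\ell]$ with $m=1$ by Lemma~\ref{lower}: indeed $-K_X\cdot\ell'=n-1$ from adjunction on $\PP^{n-1}$ with normal bundle $\OO(-1)$. Therefore $-1=D_R\cdot\ell'=k m(E\cdot\ell)=-k$, so $k=1$.

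\textbf{Step 3: mass comparison.} By Proposition~\ref{cycle},
\[
V_0=\mathbf M(\mathcal Z)\ge m_{\alpha_0}\operatorname{Vol}(D_R)=m_{\alpha_0}V_0 .
\]
Every other term contributes strictly positive mass. Hence $m_{\alpha_0}=1$ and there are no other components, so $\mathcal Z=[D_R]$ and $Z=D_R\simeq\PP^{n-1}$ by Proposition~\ref{prop:rigidity}.

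I expect the main obstacle to be the rigorous identification $k=1$ in Step~2, specifically justifying that intersection numbers transfer under the non-holomorphic embeddings $\Phi_i$. The point is that these intersection numbers are topological (homological) pairings in $\widetilde\Omega$ and in a tubular neighborhood of $D_R$, so they are insensitive to $J$. The Chern-number identity of Lemma~\ref{lower} is what fixes $m=1$.
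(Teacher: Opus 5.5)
Your proposal is correct. Steps 1 and 3 coincide with the paper's argument; the difference is in Step 2, the computation $\operatorname{Vol}_{g_\infty}(D_R)=V_0$.

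\emph{The paper's route.} It works intrinsically on the limit. The shrinker equation gives $[\omega_\infty]=c_1(X)$ in de Rham cohomology, since the term $\sqrt{-1}\partial\bar\partial f$ is exact. Adjunction with $N_{D_R/X}\simeq\OO_{\PP^{n-1}}(-1)$ gives $c_1(X)|_{D_R}=(n-1)h$. Hence $\frac{1}{(n-1)!}\int_{D_R}\omega_\infty^{n-1}=\frac{(n-1)^{n-1}}{(n-1)!}\int_{\PP^{n-1}}h^{n-1}=V_0$ in one line.

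\emph{Your route.} You transport $D_R$ back into $\widetilde\Omega$ by the Cheeger--Gromov maps and write $(\Phi_i)_*[D_R]=k[E]$. You then fix $k$ by comparing intersection numbers. These are preserved because each $\Phi_i$ is an open embedding, orientation-preserving since $J_i=\Phi_i^*J_M$ is close to $J_\infty$. Under Poincar\'e duality, pushforward corresponds to extension by zero in compactly supported cohomology.

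\emph{Trade-offs.} Your argument is longer and needs a little intersection theory on the noncompact manifolds $U$ and $\widetilde\Omega$. In return, it does not depend on normalization conventions. The paper's route relies on the $2\pi$ convention in $[\operatorname{Ric}]=c_1$ and the choice of time slice $s=-1$ for $\omega_\infty$ matching those used to define $V_0$. Yours compares $D_R$ directly with $E$ in the same rescaled metric. It also yields the extra information $(\Phi_i)_*[D_R]=[E]$ in $H_{2n-2}(\widetilde\Omega;\mathbb Z)$.

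\emph{A redundancy.} Your appeal to Lemma~\ref{lower} to get $m=1$ is unnecessary. From $-1=D_R\cdot\ell'=-km$ with integers $k\geq1$ and $m$, you already get $k=m=1$.
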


\begin{proof}
By Proposition~\ref{prop:rigidity}, $D_R\simeq\PP^{n-1}$ and $N_{D_R/X}\simeq\OO_{\PP^{n-1}}(-1)$. Hence
Proposition~\ref{maximalcompact} gives $D_R\subset Z$. Since \(Z\) has pure dimension \(n-1\) and
\(\dim D_R=n-1\), the irreducible analytic set \(D_R\) is an
irreducible component of \(Z\). Thus,
\[
        \mathcal Z
        =m_R[D_R]+\sum_{\alpha\neq R}m_\alpha[Z_\alpha],
        \qquad
        m_R\geq1.
\]
The shrinker equation gives $[\omega_\infty]=c_1(X)$ with our normalization. Since \(D_R\) is the exceptional divisor of
the blow-up of a smooth point, we know $c_1(X)|_{D_R}=(n-1)h$, where $h=c_1\bigl(\OO_{\PP^{n-1}}(1)\bigr)$. Therefore
\[
\begin{aligned}
        \operatorname{Vol}_{\omega_\infty}(D_R)
        =
        \frac{1}{(n-1)!}
        \int_{D_R}\omega_\infty^{n-1}  
        =
        \frac{(n-1)^{n-1}}{(n-1)!}
        \int_{\PP^{n-1}}h^{n-1}
        =V_0.
\end{aligned}
\]
On the other hand, the Proposition \ref{cycle} gives $\mathbf M_{\omega_\infty}(\mathcal Z)=V_0$. Consequently,
\begin{align*}
        V_0
        =
        \mathbf M_{\omega_\infty}(\mathcal Z)
        =
        m_R\operatorname{Vol}_{\omega_\infty}(D_R)
        +\sum_{\alpha\neq R}
          m_\alpha\operatorname{Vol}_{\omega_\infty}(Z_\alpha)
        \geq m_RV_0,
\end{align*}
it follows that $m_R=1$. And every positive-dimensional compact analytic component has strictly
positive \(\omega_\infty\)-volume. Hence no additional component can
occur, and therefore $\mathcal Z=[D_R]$. Taking supports gives $Z=D_R\simeq\PP^{n-1}$.
\end{proof}

\begin{proposition}
\label{prop:q-is-blowdown}
The relative cone \(\overline{\operatorname{NE}}(X/W)\) consists of
the single ray \(R\), and \(o\in W\) is smooth and \(q\) is the ordinary blow-up
of \(W\) at \(o\).
\end{proposition}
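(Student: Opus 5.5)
Fix a relative extremal ray $R\subset\overline{\operatorname{NE}}(X/W)$. By Proposition~\ref{prop:saturation}, $Z=D_R\simeq\PP^{n-1}$ and $q^{-1}(o)=Z$ (Proposition~\ref{sz}). The plan has three steps: show the cone is a single ray, show $\varphi_R$ is already $q$, and then read off the smoothness of $o$.

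\textbf{Step 1: the cone is one ray.} Every curve $C$ contracted by $q$ is a compact positive-dimensional analytic subvariety, so $C\subset Z=D_R$ by Proposition~\ref{maximalcompact}. Curves in $D_R\simeq\PP^{n-1}$ have classes that are positive multiples of the class of a line $\ell\subset D_R$, because $N_1(\PP^{n-1})=\mathbb R[\ell]$. Hence every contracted curve has numerical class in $\mathbb R_{>0}[\ell]$. The relative cone $\overline{\operatorname{NE}}(X/W)$ is generated by the classes of the rational curves $C_j$ contracted by $q$, so $\overline{\operatorname{NE}}(X/W)=\mathbb R_{\ge0}[\ell]=R$. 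In particular $R$ is the unique extremal ray.

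\textbf{Step 2: $\rho_R$ is an isomorphism.} Since $R$ is the whole relative cone, $\varphi_R$ contracts exactly the curves contracted by $q$. I would then argue that $\rho_R:X_R\to W$ is finite. It is proper. It is birational, because both $\varphi_R$ and $q$ are. It contracts no curve: if $\Gamma\subset X_R$ were a curve with $\rho_R(\Gamma)$ a point, take a curve $C\subset X$ lying over $\Gamma$, which exists since $\varphi_R$ is surjective and proper. Then $q(C)$ is a point, so $[C]\in R$, so $\varphi_R(C)$ is a point, a contradiction. So $\rho_R$ is quasi-finite and proper, hence finite. It is also birational onto the normal variety $W$, so it is an isomorphism.

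A cleaner route to the same conclusion uses the uniqueness part of the contraction theorem. Because $\overline{\operatorname{NE}}(X/W)=R$, $\varphi_R$ and $q$ have the same associated face, and $\varphi_{R*}\mathcal O_X=\mathcal O_{X_R}$, $q_*\mathcal O_X=\mathcal O_W$. Rigidity of contractions then gives $X_R\simeq W$ over $W$.

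\textbf{Step 3: smoothness and the blow-up.} Proposition~\ref{prop:rigidity} says that near $D_R$ the map $\varphi_R$ is the blow-down of $D_R\simeq\PP^{n-1}$, with normal bundle $\OO_{\PP^{n-1}}(-1)$, to a smooth point. By Step 2 this point is $o$. So $o$ is a smooth point of $W$, and $q$ is the blow-up of $W$ at $o$ in a neighborhood of $q^{-1}(o)$. Away from $o$, $q$ is an isomorphism by Proposition~\ref{sz}. To globalize, note that the blow-up $\Bl_oW\to W$ and $q$ are both isomorphisms over $W\setminus\{o\}$ and agree over a neighborhood of $o$. Since $q$ is projective, they glue to an isomorphism $X\simeq\Bl_oW$ over $W$.

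\textbf{Main obstacle.} The delicate point is Step 2, the passage from ``$\varphi_R$ is locally a blow-down'' to ``$q$ itself is the blow-up''. Remark~\ref{rem:relative-AO} gives a local statement near the exceptional fiber, and the identification of $X_R$ with $W$ must be done globally and compatibly with the morphisms. I would rely on the finiteness of $\rho_R$ together with Zariski's main theorem, as in the proof of Proposition~\ref{sz}. That requires checking that $X_R$ is normal (this is part of the contraction theorem) and that $\rho_R$ has no positive-dimensional fibers, which is exactly where the single-ray conclusion of Step 1 is used.
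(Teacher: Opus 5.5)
Your proposal is correct and follows the paper's proof essentially step for step. The relative cone collapses to a single ray because every $q$-contracted curve lies in $Z=D_R\simeq\PP^{n-1}$. Then $\rho_R$ (the paper's $\overline q$) is shown to be finite and birational onto the normal variety $W$, hence an isomorphism. Finally, smoothness of $o$ and the blow-up description are read off from Proposition~\ref{prop:rigidity}.

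The differences are minor. You get quasi-finiteness from the fact that $\rho_R$ contracts no curve, whereas the paper uses that $\overline q$ is an isomorphism away from $b_R$ with $\overline q^{-1}(o)=\{b_R\}$. You also spell out how the local blow-up description glues with the isomorphism over $W\setminus\{o\}$, a step the paper leaves implicit.
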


\begin{proof}
Every curve contracted by \(q\) is contained in $q^{-1}(o)=Z=D_R\simeq\PP^{n-1}$. Since $\overline{\operatorname{NE}}(\PP^{n-1})=\mathbb R_{\geq0}[\ell]$, all \(q\)-contracted curves have numerical classes in the same relative ray. Hence $\overline{\operatorname{NE}}(X/W)=R$.

The morphisms \(q\) and \(\varphi_R\) therefore contract precisely the
same curves. Since \(\varphi_R\) has connected fibers, \(q\) factors as $q= \overline q \circ \varphi_R$ for a projective birational morphism $\overline q:X_R\longrightarrow W$. The morphism \(\overline q\) is an isomorphism away from
\(b_R=\varphi_R(D_R)\), and $\overline q^{-1}(o)=\{b_R\}$. Thus \(\overline q\) is quasi-finite. Since it is projective, it is
finite. Both \(X_R\) and \(W\) are normal, so a finite birational
morphism between them is an isomorphism. Therefore $X_R\simeq W$ and $q=\varphi_R$.

By Proposition~\ref{prop:rigidity}, \(X_R\) is smooth at \(b_R\) and
\(\varphi_R\) is the ordinary blow-up of \(b_R\). Hence \(W\) is
smooth at \(o\), and \(q\) is the ordinary blow-up of \(W\) at \(o\).
\end{proof}

\begin{proof}[Proof of main theorem]
By Lemma~\ref{lem:smooth-cone} and Proposition~\ref{prop:q-is-blowdown}, we know 
\begin{align*}
    X\simeq\operatorname{Bl}_{o}W\simeq\operatorname{Bl}_{o}{\mathbb C^n},   
\end{align*}
by \cite[Theorem E]{CDS}, we know $X$ is the FIK shrinker.
\end{proof}

\paragraph{Acknowledgements.} The authors are grateful to Professors Andreas Höring and Zhixian Zhu for many helpful discussions and valuable advice on the algebraic-geometric aspects of this work. Zhenlei Zhang is supported by the National Natural Science Foundation of China (Grant No. 12531001)
\paragraph{Use of AI.}
The main ideas, arguments, and overall direction of the paper were
developed by the authors. AI tools (Chat GPT and Deepseek) were used during the preparation of
the manuscript as technical assistants for checking arguments,
locating relevant references, and improving the exposition. In particular, AI tools suggested estimating the length of a relative extremal ray and using the obtained equality case for the algebraic‑geometric rigidity argument.
All mathematical statements, hypotheses, references, proofs, and
conclusions were independently verified by the authors, who take full
responsibility for the contents of the paper.

\bibliographystyle{alpha}
\bibliography{ref.bib}

\section*{Author Information}
\noindent\textbf{Tongxin Xu}$^*$\\
School of Mathematical Sciences, Capital Normal University\\
Email: 2250501032@cnu.edu.cn
\vspace{1em}

\noindent\textbf{Zhenlei Zhang}$^\dagger$\\
School of Mathematical Sciences, Capital Normal University\\
Email: zhleigo@aliyun.com\\
\end{document}